\documentclass[11pt, final]{article}
\usepackage[utf8]{inputenc}
\usepackage[english]{babel}
\usepackage{csquotes}
\usepackage{comment}
\usepackage{amsmath}
\usepackage{amsthm}
\usepackage{amssymb}
\usepackage{commath}
\usepackage{derivative}
\usepackage[mathcal]{eucal}
\let\mathscr\mathcal
\usepackage{color}
\usepackage{tensor}
\usepackage{breqn}
\usepackage{hyperref}
\usepackage{tikz}
\usepackage{pgfplots}
\pgfplotsset{compat=1.18}

\usepackage[a4paper, total={6in, 9in}]{geometry}

\newcommand{\supnorm}[1]{\left\lVert #1 \right\rVert_{\mathbf{L}^{\infty}}}

\newcommand{\pnorm}[2]{\left\lVert #2 \right\rVert_{#1}}


\newcommand{\R}{\mathbb{R}}
\newcommand{\eps}{\varepsilon}
\newcommand{\sgn}{\mathop{\rm sgn}}

\newcommand{\Czero}{\mathbf{C}} 
\newcommand{\Ck}[1]{\mathbf{C}^{#1}} 
\newcommand{\Cc}[1]{\mathbf{C}_\mathbf{c}^{#1}}
 
\renewcommand{\L}[1]{\mathbf{L}^{#1}} 
\newcommand{\Lloc}[1]{\mathbf{L}_{\mathbf{loc}}^{#1}} 

\newcommand{\cF}{\mathcal{F}}

\newtheorem{theorem}{Theorem}[section]
\newtheorem{corollary}{Corollary}[theorem]
\newtheorem{lemma}[theorem]{Lemma}

\title{Failure of uniqueness for scalar conservation laws}
\author{
		Shyam Sundar Ghoshal\thanks{Centre for Applicable Mathematics, Tata Institute of Fundamental Research} \and
		Abraham Sylla\thanks{LAMFA CNRS UMR 7352, Université de Picardie Jules Verne} \and
		Parasuram Venkatesh\footnotemark[1]
}
\date{}

\begin{document}
\maketitle
\begin{abstract}
	In this article, we develop what are, to the best of our knowledge, the first \emph{negative} results for scalar conservation laws. We begin with explicit examples where bounded initial data leads to $\L{\infty}$ blow-up despite flux regularity. More strikingly, we demonstrate that Kružkov's entropy equalities alone fail to ensure uniqueness in this regime by constructing infinitely many entropy solutions to a single Cauchy problem with bounded initial datum, each continuous in time with respect to the $\L{1}$ norm. Thus, we demonstrate that the $\L{\infty}$ assumption is essential for the doubling of variables argument, and hence for the uniqueness of entropy solutions to scalar conservation laws. On the positive side, we develop a novel theory for scalar conservation laws with spatial heterogeneity by adapting the front tracking method. We recover uniqueness by imposing a Lax-type condition in addition to the entropy inequality, motivated by the properties of our front tracking approximations. Unbounded Kružkov solutions do not necessarily satisfy the weak formulation; we show that global weak solutions may not even \emph{exist} in a natural class for some Cauchy problems of this form, even when Kružkov entropy solutions exist. Finally, we construct an explicit example of global \emph{ill-posedness} with bounded initial datum.
	
	\smallskip
	
	\noindent\textbf{Keywords:} scalar conservation laws, front tracking, generalised characteristics, heterogeneous flux, blow-up, entropy solutions.
	
	\smallskip
	
	\noindent\textbf{MSC2020:} 35L65, 35L67 (Primary), 35A02, 35B44 (Secondary)
\end{abstract}
\tableofcontents

\section{Introduction}
Consider the Cauchy problem for the scalar conservation law in several spatial dimensions,
\begin{equation}
	\label{clawmulti}
	\begin{aligned}
		u_t+\sum_{i=1}^{n}f_i(x,u)_{x_i} & =0, \\
		u(x, 0) & = u_0(x).
	\end{aligned}
\end{equation}
Equations of this form appear in many physical contexts such as hydrodynamic models of traffic flow \cite{trafficnetwork}, sedimentation problems \cite{BKRT2004}, and multi-phase flows in porous media \cite{Jaffr1995}. Solutions of \eqref{claw} generally develop discontinuities in finite time, even for smooth initial data, and thus weaker notions of solution are required. In order to develop our explicit constructions for \eqref{clawmulti}, we work with the one-dimensional scalar conservation law
\begin{equation}
	\label{claw}
	\begin{aligned}
		u_t+f(x,u)_{x} & =0, \\
		u(x, 0) & = u_0(x).
	\end{aligned}
\end{equation}
The first comprehensive well-posedness theory for conservation laws with $u_0\in\L{\infty}$ was developed by Kružkov \cite{kruzkov}, following the work of Vol'pert \cite{Volpert} in the $\operatorname{BV}$ space. More generally, well-posedness was obtained also for scalar conservation laws in several space dimensions, a flux that depends on time, and source terms. Kružkov proved the existence of a unique `entropy' solution to \eqref{clawmulti}; when $n=1$ this takes the following form: $u$ is said to satisfy the entropy inequality if, for all non-negative test functions $\varphi\in\Cc{\infty}(\R\times[0,\infty))$ and $k\in\R$:
\begin{equation}\label{genent}
	\begin{split}
		&\iint_{\R^2_+}\abs{u(x,t)-k}\varphi_t(x,t)+\sgn(u(x,t)-k)\left[f(x,u(x,t))-f(x,k)\right]\varphi_x(x,t)dx \\
		\geq&\iint_{\R^2_+}\sgn(u(x,t)-k)f_x(x,k) \varphi(x,t) dxdt-\int_{\R}\abs{u_0(x)-k}\varphi(x,0)dx,
	\end{split}
\end{equation}
where $\R^2_+$ denotes the upper-half plane $\R\times[0,\infty)$ and $\operatorname{sgn}(\cdot)$ is the sign function, which evaluates to $1$ for non-negative values and $-1$ otherwise.

The novelty of this work is our analysis of the finer technical assumptions on the flux made by Kružkov. To the best of our knowledge, they constitute the first \emph{negative} results for scalar conservation laws. Our examples demonstrate that the structure of $f$ can play a decisive role in determining the well-posedness of \eqref{clawmulti}.

For equations of the form \eqref{claw}, a crucial assumption used for the a priori estimates was a lower bound on the mixed partial derivative, i.e. for some $L>0$ and for all $x,u\in\R$,
\begin{align}\label{mix}
	f_{xu}(x,u)\geq-L.
\end{align}
In several space dimensions, the right-hand side of \eqref{mix} is replaced by the spatial divergence of $f_u$; note that the two definitions coincide when $n=1$. However, this condition is not sharp. The equation \eqref{claw} has also been studied under the coercive genuinely non-linear assumptions in \cite{ConLawHJB}, relying on the correspondence between scalar conservation laws and Hamilton-Jacobi equations in one space dimension. For instance, if $f(x,u)=(2+\sin(x))u^2$, then $f$ does not satisfy \eqref{mix}, but the associated Cauchy problem is still well-posed since $f$ is `genuinely non-linear', i.e. $f_{uu}$ is non-vanishing.

In this paper we construct a counter-example demonstrating that the condition cannot be arbitrarily relaxed. That is, we construct a smooth flux violating \eqref{mix} such that the Cauchy problem may be ill-posed globally in time, even with smooth and bounded initial data. This indicates that the existence of solutions is sensitive to the structure of the flux, even with strong regularity assumptions.

The non-uniqueness of entropy solutions to non-linear partial differential equations is an active research program. Most notably for the equations of fluid dynamics, the `convex integration' method pioneered by De Lellis and Székelyhidi \cite{convexintog} has proven fruitful in the construction of multiple admissible solutions to Cauchy problems with irregular initial data. For the compressible Euler equations, this regularity assumption can be relaxed, and the non-uniqueness is exhibited after the formation of singularities \cite{LipEul}. Our constructions follow a similar pattern; in particular even for smooth initial data, there may be infinitely many global-in-time entropy solutions to the Cauchy problem. A secondary active question is the search for a criterion that picks out a unique solution among the pathological ones. We provide such a criterion in the form of a Lax-type condition on interfaces generating the pathologies, returning a unique solution as the limit of front-tracking approximations.

Kružkov's proof for the uniqueness of entropy solutions does not require the assumption \eqref{mix}, it merely requires that the flux be continuously differentiable in all its variables with $f_x$ Lipschitz in $u$ on compact sets \cite[Theorem 1]{kruzkov}. However, the comparison principle used in the argument crucially relies on uniform bounds of the solutions being compared. Thus, it only demonstrates the uniqueness of $\L{\infty}$ solutions. We construct examples of fluxes satisfying all assumptions of the uniqueness theorem but without global $\L{\infty}$ a priori estimates. That is, the $\L{\infty}$ norm of entropy solutions may blow up in finite time, and the comparison principle is valid only up to this time. The blow-up goes hand-in-hand with non-uniqueness of entropy solutions; we construct explicit examples of Cauchy problems that admit infinitely many entropy solutions. The initial data may also be chosen such that $u_0\in\L{\infty}$. Although we work with an explicit example for illustration, our construction is generic, and applies to a large number of fluxes.

Both constructions are essentially one-dimensional, but can be carried out in arbitrary spatial dimensions by a simple extension. Thus, we direct our attention to fluxes of the specific form
\begin{align}\label{mult_form}
	f(x,u)=g(x)h(u),
\end{align}
where $g$ is locally Lipschitz and $h\in \Ck{2}(\R)$ is strictly convex with $h^{\prime}(0)=h(0)=0$, and thus $h(u)\geq0$ with equality holding if and only if $u=0$. In the sequel, we make a slight abuse of notation by writing $f(x,u(x,t))$ as $f\circ u(x,t)$. The precise structural assumptions are outlined in section~\ref{ass}. If $g$ in \eqref{mult_form} takes on both positive and negative values, then the flux $f$ is not genuinely non-linear everywhere and no known well-posedness theory applies. Thus, we develop a new framework based on the earlier work \cite{hetft}, inspired by the front tracking method of Dafermos \cite{polygon}.

Cauchy problems with such fluxes have entropy solutions that are obtained as limits of front tracking approximations. However, entropy solutions in the sense of \eqref{genent} may not be unique. We introduce a further constraint to recover uniqueness -- in particular, it is satisfied by limits of front tracking approximations, which therefore converge to a unique limit. In the final section, we show that outside the class of fluxes with form \eqref{mult_form}, the pathologies can worsen, such that global entropy solutions may not even exist.

Fluxes of this form satisfy neither the Kruzkov assumptions \cite{kruzkov} nor the coercivity conditions of Colombo, Perrollaz, and Sylla in \cite{ConLawHJB}. This has important consequences for well-posedness. In particular, although entropy solutions are unique, they may not exist as bounded functions, though our structural constraints outlined in sections~\ref{ass} can ensure that they are at least locally integrable.

Time-regularity of entropy solutions for multiplicative flux equations was analysed by Felix Otto in \cite{otto}, though well-posedness was not discussed. More generally, the study of conservation laws with spatial heterogeneity has been conducted in various forms. Motivated by relativistic applications, LeFloch and Ben-Artzi \cite{hypconlawmanifolds} analysed conservation laws on Riemannian or Lorentzian manifolds, where spatial heterogeneities naturally arise. Lengeler and Müller \cite{Lengeler} further proved the $\L{1}$ contraction property on closed manifolds. Convergence of a numerical scheme was demonstrated by Abraham Sylla \cite{sylla2024convergencefinitevolumescheme}, discretising the heterogeneity and applying the theory of discontinuous flux \cite{discofluxog}. Ghoshal and Venkatesh \cite{singleshock} demonstrated studied the asymptotic emergence and $\L{2}$ stability of simple shock for spatially heterogeneous conservation laws. Anne-Laure Dalibard \cite{dalibard} developed a pure $\L{1}$ theory for such conservation laws, based on the kinetic formulation \cite{perthame}. A front tracking approximation scheme for such equations was developed by Parasuram Venkatesh \cite{hetft}, which we leverage here to construct entropy solutions of the Cauchy problem \eqref{claw}.

\subsection{Structure of the paper}
The rest of this paper is structured as follows. In the following section~\ref{prelim} we collate some results for scalar conservation laws in one spatial dimension that are crucial for our constructions. In section~\ref{ass} we list the structural assumptions we impose on the flux \eqref{mult_form} in order to develop the well-posedness theory. Section~\ref{summary} colloquially summarises our results for the ease of reference. Next, in section~\ref{wp} we demonstrate the possibility of $\L{\infty}$ blow up in finite time, and show how Kružkov's entropy formulation can still make sense, though the entropic solution may not satisfy \eqref{claw} in the weak sense, unless it is uniformly bounded. We also prove the non-uniqueness of entropy solutions, first with an explicit example and then more generically in Section~\ref{generic nonuni}. In order to recover a unique solution, we develop an approximation theory for the Cauchy problem, through which we also prove the existence of solutions in the large. Section~\ref{ft approx} concerns the front tracking method we use to prove the existence of Kružkov entropy solutions; we also prove that the approximations converge to a unique limit by imposing a Lax-type condition that allows us to recover uniqueness. In section~\ref{non int}, we illustrate the construction of some explicit solutions for equations whose fluxes do not satisfy all the structural assumptions. Finally, in section~\ref{illp}, we construct a Cauchy problem for a scalar conservation law for which no global solution exists.

\subsection{Preliminaries}\label{prelim}
Here we recall some preliminary concepts that we will use in the sequel. Dafermos' classical text \cite{Dafermos2016} is a comprehensive reference for all the results here. Differentiable solutions of \eqref{claw} exist locally in time for smooth initial data \cite{Kato1975}, but generally break down in finite time. Hence the concept of entropy is introduced -- solutions satisfying a family of inequalities of the form
\begin{equation}\label{Het entropy}
	\eta(u)_t+Q(x,u)_x+\eta^{\prime}(u)f_x(x,u)-Q_x(x,u)\leq0,
\end{equation}
in the sense of distributions over $\Cc{\infty}(\mathbb{R}\times[0,\infty))$ for all pairs of functions $\eta,Q$ such that the `entropy' $\eta$ is a convex function and the `entropy flux' $Q(x,\cdot)$ is an antiderivative of $\eta^{\prime}(\cdot)f_u(x,\cdot)$ for each fixed $x$. However, this conception of entropy is too strong when solutions are unbounded, as we shall see later. The characteristic equations associated to \eqref{claw}, valid for Lipschitz solutions, are given by
\begin{equation}
	\label{char ode}
	\begin{aligned}
		\dot{q}(t)&=f_u(q(t),p(t)), \\
		\dot{p}(t)&=-f_x(q(t),p(t)).
	\end{aligned}
\end{equation}
If $u$ is a weak entropic solution of \eqref{claw} which is continuous on either side of a curve $\gamma(t)$, then $\gamma$ satisfies the Rankine-Hugoniot condition
\begin{equation}\label{RH}
	\dot{\gamma}(t)=\dfrac{f(\gamma(t),u(\gamma(t)+,t))-f(\gamma(t),u(\gamma(t)-,t))}{u(\gamma(t)+,t)-u(\gamma(t)-,t)}.
\end{equation}
Let us recall Dafermos' theory of generalised characteristics \cite{GenChar} for genuinely non-linear scalar conservation laws, which allows for the use of characteristics \eqref{char ode} even when the solution develops discontinuities \cite{Filippov}.

Given an entropy solution $u(x,t)$ and flux $f$ that is strictly convex we have that from every point $(x,t)$ with $t>0$, we can define a unique \textit{forward characteristic} $y_f:[t,\infty)\to\mathbb{R}$ and a non-empty set of \textit{backward characteristics} $y_b:[0,t]\to\mathbb{R}$, i.e. Lipschitz curves with $y(t)=x$ solving the differential inclusion
\begin{equation}\label{DafDiff}
	\dot{y}(s)\in[f_u(y(s),u(y(s)+,s)),f_u(y(s),u(y(s)-,s))]
\end{equation}
on their respective domains, where $u(x\pm,t)$ respectively denote the left and right traces in space of $u$ at $(x,t)$. If the flux is \emph{uniformly} convex in the conserved variable, then these traces always exist for positive times due to the regularising effect of the non-linearity \cite{robyr}. Furthermore, the interval in \eqref{DafDiff} is non-empty for all $(x,t)$ with $t>0$ since $f_u$ is monotone increasing in the second argument and the traces satisfy the Oleinik inequality $u(x-,t)\geq u(x+,t)$ at positive times. Note that along the trajectories of \eqref{char ode},
\begin{equation}\label{f conserved}
	\begin{split}
		\frac{d}{dt}f(y(t),z(t))&=f_x(y(t),z(t))\dot{y}(t)+f_u(y(t),z(t))\dot{z}(t) \\
		&=f_x(y(t),z(t))f_u(y(t),z(t))-f_u(y(t),z(t))f_x(y(t),z(t)) \\
		&=0.
	\end{split}
\end{equation}
The differential inclusion is more restrictive than it appears prima facie. If $y:[t_0,T]\to\mathbb{R}$ is a Lipschitz solution to \eqref{DafDiff} for some $t_0\geq0$, then for almost all $t\in[t_0,T]$ the curve either satisfies the differential \emph{equation} or the Rankine-Hugoniot condition, i.e. we have that
\[
\dot{y}(t)=
\begin{cases}
	&f_u(y(t),u(y(t),t))\text{ if }u(y(t)-,t)=u(y(t)+,t), \\[1ex]
	\smallskip
	&\dfrac{f(y(t),u(y(t)-,t))-f(y(t),u(y(t)+,t))}{u(y(t)-,t)-u(y(t)+,t)}\text{ if }u(y(t)-,t)>u(y(t)+,t).
\end{cases}
\]
The extremal backward characteristics from any point $(x,t)$ with $t>0$ are `genuine' , i.e. $y(s)$ for $s\in(0,t)$ are points of continuity for $u$, and $u(y(s),s)=z(s)$.

With these generalised characteristics, a front tracking scheme can be constructed to approximate solutions of Cauchy problems with respect to certain conservation laws with heterogeneous flux \cite{hetft}. The basic idea is to approximate $\operatorname{sgn}(u_0(x))f(x,u_0(x))$ by a piece-wise constant function, and solve the `generalised Riemann problem' at discontinuities by either a Rankine-Hugoniot shock curve or an approximate rarefaction fan, and solving forward by entropic shocks whenever fronts interact.

\subsection{Structural assumptions}\label{ass}
Let us explicitly state our assumptions on the multiplicative \eqref{mult_form} flux $f(x,u)=g(x)h(u)$.
\begin{flalign}
	\label{R}\tag{R}&\mbox{\textbf{Regularity: }}h\in \Ck{2}(\mathbb{R}),g\in \Ck{}(\mathbb{R}).&& \\
	\notag&\text{In addition, if }a,b\in\R\text{ such that for all }x\in[a,b], g(x)\neq0,\text{ then }g\in\Ck{2}([a,b]). \\
	\label{S}\tag{S}&\mbox{\textbf{Stationarity: }}h(0)=h^{\prime}(0)\equiv0. \\
	\label{SC}\tag{SC}&\mbox{\textbf{Strict Convexity: }}h^{\prime}\text{ is a strictly increasing function}. \\
	\label{LL}\tag{LL}&\mbox{\textbf{Log Lipschitz Growth:} }\log\circ h\in\operatorname{Lip}((-1,1)^c).
\end{flalign}
Throughout this paper, we will consider convexity in terms of \eqref{SC}, and mutatis mutandis for concavity. Note that \eqref{LL} ensure the finite speed of propagation property on compact sets in space for flux-bounded solutions: if $f\circ u(x,t)$ is uniformly bounded, then so is $f_{u}\circ u(x,t)$, since for non-zero $u$ we have that
\[
f_u\circ u(x,t)=g(x)h^{\prime}(u)=g(x)h(u)\dfrac{h^{\prime}(u)}{h(u)}.
\]
Note that $h(u)$ in the denominator is non-zero if $u\neq0$ by \eqref{S} and \eqref{SC}. In addition to these, we need assumptions to control how the flux behaves when $g$ vanishes. Let $G=\{x\in\mathbb{R}:g(x)=0\}$. In order to construct entropy solutions, we require that $G$ consist of disjoint (possibly degenerate) closed intervals $[a,b]$ (possibly with $a=-\infty$ or $b=+\infty$) such that for each such interval, $G\backslash[a,b]$ is still closed, i.e. the zeroes of $g$ can only accumulate on a complete interval where $g$ vanishes. We can make this a precise assumption as follows: the set of zeroes $G$ of $g$ must satisfy a discreteness condition.
\begin{flalign}
	\label{B}\tag{B}&\mbox{\textbf{Boundary Regularity of G: }}\text{ the boundary }\partial G\text{ has no accumulation point.}&&
\end{flalign}
At these points, we require that $g$ have a finite order of vanishing, that is, if $\overline{x}\in\partial G$, then for some positive exponent $\eta\geq1$ and constant $K$, we must have that $\abs{g(x)}\geq K\abs{x-\overline{x}}^\eta$ in some neighbourhood of $\overline{x}$. More precisely,
\begin{flalign}
	\label{V}\tag{V}\mbox{\textbf{Order of Vanishing: }}&\exists \eta\geq1,K>0:\forall \overline{x}\in\partial G,\exists\delta>0:\abs{g(x)}\geq K\abs{x-\overline{x}}^\eta,&& \\
	&\notag x\in(\overline{x}-\delta,\overline{x}+\delta)\cap G^c.
\end{flalign}
Let $\eta\geq1$ be the smallest exponent which suffices for all points of $\partial G$, then we impose the following growth assumption on $h$.
\begin{flalign}
	\label{CG}\tag{CG}\mbox{\textbf{Compensating Growth: }}\exists C,\varepsilon,M>0:h(u)\geq C\abs{u}^{\eta+\varepsilon}\text{ for }\abs{u}>M.&&
\end{flalign}
The assumption \eqref{CG} ensures that $u\in\Lloc{1}(\R\times[0,\infty))$ whenever $f\circ u$ is uniformly bounded.

\subsection{Summary of results}\label{summary}
Large parts of the paper involve explicit constructions of counter-examples. For the readers' convenience, we colloquially summarise our findings here. We remark that our negative results hold in arbitrary spatial dimensions by trivial extension, while the positive results are limited to one spatial dimension.
\begin{enumerate}
	\item $\L{\infty}$ blow-up: in section~\ref{wp}, we demonstrate that solutions of \eqref{clawmulti} may not remain bounded globally in time. Theorem~\ref{l inf blowup} makes precise the conditions under which such blow-up generically occurs.
	\item Non-uniqueness: following the possibility of unbounded entropy solutions to \eqref{clawmulti}, in section~\ref{nonuni} we demonstrate that they may not be unique. Thus, we show that Kruzkov's `doubling-of-variables' argument crucially relies on $\L{\infty}$ bounds and fails without them.
	\item Well-posedness: in section~\ref{ft approx}, we show that the equation \eqref{claw} is well-posed for fluxes satisfying the structural assumptions outlined in section~\ref{ass}, i.e. entropy solutions exist for locally integrable initial data, and we recover uniqueness and stability despite the negative results of section~\ref{nonuni} by imposing a further Lax-type condition in addition to the entropy inequality.
	\item Ill-posedness: finally, we demonstrate that there exist conservation laws \eqref{clawmulti} for which \emph{no} global entropy solution may exist, by constructing an explicit example in section~\ref{illp} where the entropy solution blows up everywhere in space at a finite positive time.
\end{enumerate}
All together, our results show that the theory of scalar conservation laws with spatially heterogeneous flux is not a simple extension of the homogeneous theory. To the best of our knowledge, these are the first negative results for scalar conservation laws with \emph{smooth} fluxes; regularity of the flux itself plays almost no role here.

\section{Well-posedness}\label{wp}
The notion of entropy solution for us will be an adaptation of the standard one used for scalar conservation laws, i.e. $u \in \Lloc{1}(\R \times[0,T))$ is a solution of the Cauchy problem \eqref{claw} in $\Omega_T=\R\times[0,T]$ if $f\circ u$ is uniformly bounded in $\Omega_T$, and for all $\varphi\in \Cc{\infty}(\R \times[0,T),[0,\infty))$ and $k\in\R$:
\begin{equation}\label{entweak}
	\begin{split}
		&\iint_{\R^2_+}\abs{u(x,t)-k}\varphi_t(x,t)+\sgn(u(x,t)-k)\left[f\circ u(x,t)-f(x,k)\right]\varphi_x(x,t)dx \\
		\geq&\iint_{\R^2_+}\sgn(u(x,t)-k)f_x(x,k) \varphi(x,t) dxdt-\int_{\R}\abs{u_0(x)-k}\varphi(x,0)dx.
	\end{split}
\end{equation}
Equivalently, we can characterise entropy solutions as those satisfying the above inequality for positive test functions supported in the open upper half plane and continuously approaching the initial value $u_0$ in $\Lloc{1}$ as $t\to0$. We remark that it is sufficient for $u$ and $(t, x) \mapsto f(x,u(t, x))$ to be merely $\Lloc{1}$ in order to make sense of this definition. However, entropy solutions may not satisfy the integral or `weak' formulation of \eqref{claw}, unless $u\in \L{\infty}(\R)$, in which case the integral formulation follows directly from \eqref{entweak} by considering $k=\pm\supnorm{u}$. We also show that global weak solutions may not even exist in the natural class for some Cauchy problems.

Entropy solutions also satisfy the trace property in domains of genuine non-linearity, i.e. where $g\neq0$. In fact, the solutions are locally special functions of bounded variation at all but countably many time levels, following the argument in \cite{robyr}.
\begin{lemma}\label{trace}
	If $x\in G^c$ and $t>0$, then for any entropy solution $u$, the left and right spatial traces $u(x\pm,t)$ exist and satisfy the inequality $u(x-,t)\geq u(x+,t)$ if $g(x)>0$ and $u(x-,t)\leq u(x+,t)$ if $g(x)<0$.
\end{lemma}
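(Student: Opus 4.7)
The plan is to localize spatially to a neighbourhood where $g$ has constant sign and stays bounded away from zero, upgrade the global bound on $f \circ u$ to a local $\L{\infty}$ bound on $u$ itself, and then adapt the generalised-characteristics argument of Dafermos and Robyr \cite{robyr} to the heterogeneous setting.

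Concretely, I would fix $x_0 \in G^c$ and first treat the case $g(x_0) > 0$, the other sign being analogous. By continuity of $g$ and closedness of $G$, there exists $\delta > 0$ with $g \geq c > 0$ on $I := (x_0 - \delta, x_0 + \delta)$, and by \eqref{R} we have $g \in \Ck{2}(\overline{I})$. Since $f \circ u$ is uniformly bounded on $\Omega_T$ and $g \geq c$ on $I$, the composition $h \circ u = (f \circ u)/g$ is bounded on $I \times [0, T]$. The strict convexity \eqref{SC} combined with $h(0) = h'(0) = 0$ from \eqref{S} forces $h(u) \to \infty$ as $\abs{u} \to \infty$, so we obtain a uniform bound $\abs{u} \leq M$ on $I \times [0, T]$. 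In particular, $f(x, \cdot) = g(x) h(\cdot)$ is uniformly convex on this strip, with $f_{uu}(x, u) \geq c \cdot \min_{\abs{v} \leq M} h''(v) > 0$, and $f_x$ is uniformly Lipschitz in $u$.

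Next, I would reproduce the classical Oleinik argument via extremal backward characteristics \eqref{DafDiff}, adapted to the heterogeneous setting. Fix $t > 0$ and $x_1 < x_2$ in $I$, and consider extremal backward characteristics $y_1, y_2$ from $(x_1, t)$ and $(x_2, t)$ together with the values $z_i$ they carry, all obeying \eqref{char ode}. Monotonicity of $g(x) h'(\cdot)$ in the second slot combined with the no-crossing property of extremal characteristics forces $y_1(s) < y_2(s)$ for all $s \in [0, t]$, while \eqref{f conserved} shows that the flux $f(y_i(s), z_i(s))$ is conserved along each trajectory. The uniform convexity of the flux and the local Lipschitz regularity of $f_x$ on $I \times [-M, M]$ then allow one to estimate the separation $y_2(s) - y_1(s)$ from below via a Gronwall-type argument applied to the variational system for \eqref{char ode}, producing an Oleinik-type bound
\[
u(x_2, t) - u(x_1, t) \leq \frac{C_1 (x_2 - x_1)}{t} + C_2 (x_2 - x_1),
\]
with constants $C_1, C_2$ depending only on $c$, $M$, and local $\Ck{2}$ norms of $g$ and $h$. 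Consequently, $u(\cdot, t) - C_2 x$ agrees almost everywhere with a non-increasing function on $I$, so the one-sided limits $u(x\pm, t)$ exist at every $x \in I$ and satisfy $u(x-, t) \geq u(x+, t)$. The case $g(x_0) < 0$ is handled identically, with $f(x, \cdot)$ uniformly concave on the corresponding strip and the Oleinik bound reversed.

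The principal technical obstacle is that, unlike in the homogeneous case, $u$ is not constant along characteristics here: the ODE \eqref{char ode} carries the source term $-f_x$ that must be tracked through the variational estimate. However, on the localized strip $I \times [0, T]$ all coefficients in \eqref{char ode} and its variational equation are uniformly Lipschitz, so a standard Gronwall argument closes the estimate. The structural assumptions play complementary roles: \eqref{R}, \eqref{S}, and \eqref{SC} together provide the coercivity of $h$ needed to turn the global flux bound into a local $\L{\infty}$ bound on $u$, while \eqref{SC} alone supplies the uniform convexity that makes the characteristic comparison effective.
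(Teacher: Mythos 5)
Your localization step and the upgrade from the global bound on $f\circ u$ to a local $\L{\infty}$ bound on $u$ via the coercivity of $h$ are exactly the paper's first move (the paper additionally uses \eqref{LL} to bound $f_u\circ u$ and shrink to a trapezoidal domain of determinacy, which matters below). The divergence --- and the gap --- is in how you then produce the traces. You invoke extremal backward characteristics obeying \eqref{DafDiff} and \eqref{char ode} and run an Oleinik/Gronwall estimate along them. But the differential inclusion \eqref{DafDiff} is formulated in terms of the one-sided traces $u(y(s)\pm,s)$, and the existence, genuineness and no-crossing properties of extremal backward characteristics for a merely $\L{\infty}$ (let alone $\Lloc{1}$) entropy solution are structure theorems that are established \emph{after} one knows the solution has traces --- in this paper that prerequisite is supplied precisely by the regularising-effect result \cite{robyr} and the Hamilton--Jacobi correspondence \cite{correspondence}, which is what the lemma is recording. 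As written, your argument is circular: you use the generalised-characteristics machinery to prove the very trace property that licenses that machinery. The paper avoids this by restarting the problem at time $t-\eps$ inside the trapezoid, observing that there $u$ is a bounded entropy solution of a uniformly convex conservation law, and then quoting the correspondence with viscosity solutions of Hamilton--Jacobi equations to conclude existence of traces and the Oleinik sign condition. To salvage your route you would have to derive the one-sided estimate without presupposing traces, e.g.\ by running the comparison on smooth or front-tracking approximations and passing to the limit, or simply by citing \cite{correspondence} or \cite{robyr} as the paper does.

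A secondary issue: your backward characteristics are run on all of $[0,t]$, but the uniform convexity, the bound $\abs{u}\leq M$, and the Lipschitz bounds on the coefficients of \eqref{char ode} hold only on $I\times[0,T]$; nothing in your argument prevents $y_1,y_2$ from exiting $I$ at earlier times and entering a region where $g$ vanishes or changes sign, where the whole comparison breaks down. This is why the paper restarts at time $t-\eps$ with $\eps$ chosen so that the backward domain of dependence (with slope $R=\supnorm{f_u\circ u}$, finite by \eqref{LL}) stays inside $[x-d,x+d]$. Your Oleinik bound should accordingly carry $\eps$ rather than $t$ in the denominator, which is harmless for the existence of traces at time $t$ but needs to be said.
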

\begin{proof}
	Let $t>0$, $x\in G^c$ and assume without loss of generality that $g(x)>0$. Since $G$ is a closed set, there is a maximal open interval $(a,b)\subseteq G^c$ such that $x\in(a,b)$. There exists some $d>0$ be such that $[x-d,x+d]\subset(a,b)$. Set $M=\supnorm{f\circ u}$, then by \eqref{LL}, we have that $f_u\circ u$ is also uniformly bounded, say by $R>0$. Now, there exists some $\varepsilon>0$ small enough such that $t>\varepsilon$ and $d>R\varepsilon$. Consider the trapezoidal domain
	\[
	T_x=\{(y,s):s\in[t-\varepsilon,t],y\in[x-d+R(s-t+\varepsilon),x+d-R(s-t+\varepsilon)]\}.
	\]
	Since $f\circ u$ is bounded, and $[x-d,x+d]$ is a compact subset of $G^c$, we have that $u$ is uniformly bounded in $T_x$ as well. If we let $u_0(x)=u(x,t-\varepsilon)$ for $x\in[x-d,x+d]$ and zero otherwise, then in the domain $T_x$, the bounded measurable function $u$ is an entropy solution of a genuinely non-linear scalar conservation law. Since the entropy solutions of such equations correspond with viscosity solutions of the associated Hamilton-Jacobi equation \cite{correspondence}, it follows that $u$ has spatial traces in the domain $T_x\cap[s>t-\varepsilon]$, which includes the point $(x,t)$. Since $(x,t)$ was arbitrarily chosen from $G^c\times(0,\infty)$, we are done.
\end{proof}
Note that if $u\in \L{\infty}$, then the traces of $f\circ u$ on $G\times[0,T]$ are zero. In the next section we demonstrate that working with $\Lloc{1}$ is necessary for multiplicative fluxes, as the $\L{\infty}$ norm can blow up even for uniformly bounded initial data. For such solutions, $f\circ u$ may have non-zero trace on the degenerate interface.

\subsection{$\L{\infty}$ blow-up of entropy solutions}
For particular fluxes, we can show explicitly that the Cauchy problem corresponding to \eqref{claw} is not globally well-posed in $\L{\infty}$, though it still is in $\Lloc{1}$. Let $f$ be given by
\begin{equation}
	\label{flipping flux}
	f(x,u)=xu^2.
\end{equation}
Although $f$ as defined satisfies the assumptions of Theorem~\ref{vander} proved later, it is instructive to first visualise the blow-up phenomenon explicitly. This flux is convex for $x>0$ and concave for $x>0$, but degenerate for $x=0$. The characteristic equations thus reduce to
\begin{align}
	\dot{q}(t)&=2q(t)p(t), \label{char ode flipping flux q} \\
	\dot{p}(t)&=-p(t)^2. \label{char ode flipping flux p}
\end{align}
We can integrate this system by first solving in $p$, and then using the Hamiltonian structure of the system. More precisely, if $p(0) = 0$, then, $p \equiv 0$. Otherwise, 
\[
p(t)=\dfrac{1}{t+p(0)^{-1}}.
\]
We see that $p$ blows up in finite time if $p(0) < 0$. Then, using the fact that $t \mapsto f(q(t), p(t))$ is constant, we deduce that 
\begin{equation}
	\label{q formula 1}
	q(t) = q(0)\left(tp(0)+1\right)^2,
\end{equation}
which holds for as long as $p$ exists, i.e. for $t<-{p(0)}^{-1}$. If $q(0)=0$, however, the trivial solution satisfies \eqref{char ode flipping flux q} globally in time.

With this explicit formula for characteristics, we have the following blow-up result.
\begin{theorem}\label{l inf blowup}
	Fix $u_0\in \Czero \cap \L{\infty}(\R)$. Assume that $u_0$ is non-increasing on $(-\infty,0]$, non-decreasing on $[0,\infty)$, and that $u_0(0) < 0$. If $u_0(0)<0$, then a continuous entropy solution of \eqref{claw} exists for $t<-u_0(0)^{-1}$, and its $\L{\infty}$ norm of the solution blows up at $(x,t)=(0,-u_0(0)^{-1})$.
\end{theorem}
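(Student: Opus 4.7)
The plan is to build the continuous entropy solution explicitly by the method of characteristics, using the formulas for $p$ and $q$ already derived in this section, and then read off the blow-up directly from the characteristic issuing from the origin. For each $x_0 \in \R$, let $(q(\cdot;x_0), p(\cdot;x_0))$ solve \eqref{char ode flipping flux q}--\eqref{char ode flipping flux p} with initial data $(x_0, u_0(x_0))$. Since $u_0$ attains its minimum at $0$, whenever $u_0(x_0)<0$ the $p$-trajectory blows up at $-u_0(x_0)^{-1} \geq -u_0(0)^{-1}$, while for $u_0(x_0) \geq 0$ it exists globally; every characteristic is thus defined on $[0, -u_0(0)^{-1})$.

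The first substantive step is to show that $\Phi_t : x_0 \mapsto q(t;x_0) = x_0(1 + tu_0(x_0))^2$ is a strictly increasing homeomorphism of $\R$ for each $t \in [0, -u_0(0)^{-1})$. For $0 < x_0 < x_1$, monotonicity of $u_0$ on $[0,\infty)$ together with $t < -u_0(x_0)^{-1}$ forces $0 < 1 + tu_0(x_0) \leq 1 + tu_0(x_1)$, so squaring and multiplying by $x_i$ gives $q(t;x_0) < q(t;x_1)$; the case $x_0 < x_1 < 0$ is symmetric and the mixed cases are immediate because $q(t;0) = 0$. Continuity and coercivity of $\Phi_t$ follow from the explicit formula together with $\supnorm{u_0} < \infty$.

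Having inverted $\Phi_t$, define $u(x,t) = p(t;\Phi_t^{-1}(x)) = u_0(\Phi_t^{-1}(x))/(1 + tu_0(\Phi_t^{-1}(x)))$, which is continuous on $\R \times [0, -u_0(0)^{-1})$. To confirm the entropy inequality \eqref{entweak}, I would approximate $u_0$ by a sequence $u_0^\varepsilon \in \Ck{\infty}(\R)$ preserving the monotonicity profile and converging locally uniformly to $u_0$; the identical characteristic construction yields classical $\Ck{1}$ solutions $u^\varepsilon$ on $\R \times [0, -u_0^\varepsilon(0)^{-1})$, each of which trivially satisfies \eqref{entweak}. Since $\Phi_t^\varepsilon \to \Phi_t$ uniformly on compacts, $u^\varepsilon \to u$ uniformly on compact subsets of $\R \times [0, T]$ for any $T < -u_0(0)^{-1}$, and passage to the limit transfers the entropy inequality to $u$.

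For the blow-up, the characteristic from $x_0 = 0$ satisfies $q(t;0) \equiv 0$ and $p(t;0) = u_0(0)/(1 + tu_0(0)) \to -\infty$ as $t \to (-u_0(0)^{-1})^-$, so $u(0,t) \to -\infty$; an inversion argument using $x_0(1 + tu_0(x_0))^2 = x$ shows that for any $\delta > 0$, $u$ remains bounded on $\{|x| > \delta\} \times [0, -u_0(0)^{-1}]$, so the singularity is truly localised at $x = 0$. The main technical obstacle is the passage to the limit in the sign-function factors of \eqref{entweak} under merely locally uniform convergence: this requires $\{u = k\}$ to have planar measure zero for almost every $k$, which follows from a coarea-type argument since $u$ is continuous and strictly varies along the characteristic field (level sets are unions of characteristic curves).
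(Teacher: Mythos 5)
Your argument is essentially the paper's: both rely on the explicit characteristic formulas \eqref{char ode flipping flux q}--\eqref{char ode flipping flux p} together with the monotonicity of $u_0$ on each half-line to show that distinct characteristics cannot cross before $t=-u_0(0)^{-1}$, and both read off the blow-up from the Riccati equation for $p$ along the trivial characteristic $q\equiv 0$. Your extra steps --- verifying that the flow map $\Phi_t$ is an increasing homeomorphism and passing the entropy inequality through a smooth approximation --- merely make explicit what the paper's terser proof leaves implicit, so this is the same argument carried out in more detail.
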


\begin{proof}
	A continuous solution exists as long as distinct characteristics do not intersect. The characteristic emanating from $x=0$ is just the trivial one. Hence, it is sufficient to show that characteristics emanating from $x>0$ or $x<0$ do not intersect with each other, respectively. Let $0\leq q_1(0)<q_2(0)$; then by hypothesis $u_0(0)\leq p_1(0)\leq p_2(0)\leq0$. For $0<t<-u_0(0)^{-1}$, by \eqref{q formula 1} we have that
	\begin{align}
		q_1(t)&=q_1(0)\left(tp_1(0)+1\right)^2, \label{q1} \\
		q_2(t)&=q_2(0)\left(tp_1(0)+1\right)^2. \label{q2}
	\end{align}
	Now, we can divide \eqref{q1} by \eqref{q2} to get
	\[
	\dfrac{q_1(t)}{q_2(t)}=\dfrac{q_1(0)}{q_2(0)}\left(\dfrac{tp_1(0)+1}{tp_2(0)+1}\right)^2.
	\]
	Since the right-hand side is always less than one, we see that the characteristics, as long as they exist, cannot meet. The same argument applies, \textit{mutatis mutandis}, for characteristics emanating from negative values of $x$. Blow-up of the characteristics as $t$ approaches $-u_0(0)^{-1}$ when $u_0(0)<0$ follows from \eqref{char ode flipping flux p}, continuity, and positive/negative monotonicity.
\end{proof}
If $u_0$ is merely bounded and non-decreasing/non-increasing on $[0,\infty)$ and $(-\infty,0]$ respectively, similar results hold. At points of discontinuity, the forward characteristics fan out as a rarefaction wave, and are again non-intersecting as long as they exist. Consider
\begin{equation}\label{blowupdata}
	u_0(x)=
	\begin{cases}
		-1&\text{ if }\abs{x}\leq1, \\
		0&\text{ if }\abs{x}>1,
	\end{cases}
\end{equation}
which satisfies the prescribed monotonicity conditions. Then we have a blow-up at $t=1$, and the characteristics can be pictured as in Figure~\ref{figblowup}
\begin{figure}[htbp]
	\centering
	\begin{tikzpicture}[scale=3]
		\def\xmin{-2}
		\def\xmax{2}
		\def\tmin{0}
		\def\tmax{1}
		
		\draw[<->] (\xmin-0.1,0) -- (\xmax+0.1,0) node[right] {$x$};
		\draw[->] (0,0) -- (0,\tmax+0.1) node[above] {$t$};
		
		\foreach \x in {-2,-1,0,1,2}
		\draw (\x,0.05) -- (\x,-0.05) node[below] {$\x$};
		
		\draw[black, dashed] (\xmax,\tmax) -- (\xmin,\tmax);
		
		\node[right] at (\xmax, \tmax) {$t=1$};
		
		\foreach \xinit in {-2, -1.8, -1.6, -1.4, -1.2, -1, 1, 1.2, 1.4, 1.6, 1.8, 2}{
			\draw[black] (\xinit,0) -- (\xinit,\tmax);
		}
		
		\foreach \xinit in {-1,-0.8,-0.6,-0.4,-0.2,0,0.2,0.4,0.6,0.8,1} {
			\draw[black, thick] plot[domain=0:1, samples=100, variable=\t] ({\xinit*(pow(1-\t,2))}, \t);
		}
		
		\foreach \uinit in {0.1, 0.2, 0.3, 0.4, 0.6, 0.8} {
			\draw[black, thin] plot[domain=0:1, samples=100, variable=\t] ({(pow(1-\uinit*\t,2))}, \t);
			\draw[black, thin] plot[domain=0:1, samples=100, variable=\t] ({-(pow(1-\uinit*\t,2))}, \t);
		}
	\end{tikzpicture}
	\caption{Characteristics associated with the Cauchy problem for \eqref{claw} with $f(x,u)=xu^2$ and initial data given by \eqref{blowupdata}. The $\L{\infty}$ norm of the entropy solution $u$ blows up along $\{x=0,t\geq1\}$.}\label{figblowup}
\end{figure}
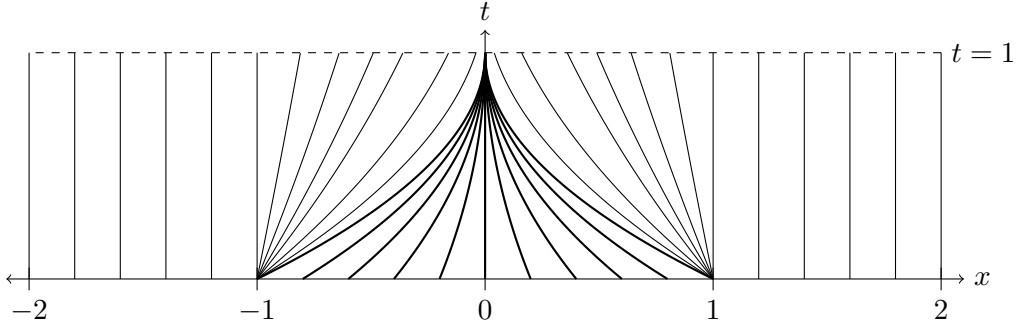

\subsection{Non-uniqueness}\label{nonuni}
Unexpectedly, the entropy condition \eqref{entweak} is not sufficient to ensure uniqueness. If a uniformly bounded entropy solution exists, then it is still unique \textit{in the class of} bounded solutions \cite[Theorem 1]{kruzkov}, but we know from Theorem~\ref{l inf blowup} that we cannot remain in this class. Let us consider the flux $f(x,u)=xu^2$ as before.

Recall that stationary solutions in $x>0$ are of the form $f(x,u(x))=c$ for $u(x)\leq0$ and some real number $c$, and that the positive stationary solution corresponding to $c$ does not satisfy \eqref{entweak}, to rephrase. However, if we add a dissipative term to the other side of the boundary, we can cancel out the trace term at the interface, and this `glued' solution stands in contrast with the one we would obtain by solving as a rarefaction fan.

\subsubsection{Entropy-preserving solution}
First, we construct a \textit{discontinuous} entropy-preserving stationary solution of \eqref{claw} for the flux $f(x,u)=xu^2$, that is a function that satisfies the entropy inequality as an equality. The construction generically works for any similarly multiplicative flux. We also remark that instead of $g(x)=x$, we could also take a function $g$ that is uniformly bounded but agrees with the identity function on some compact interval $[-K,K]$ where $K>0$. Our results are essentially localised to the interface $x=0$ where the flux flips from convex to concave.

\begin{lemma}\label{stat sol}
	Define 
	\[
	u(x)=
	\begin{cases}
		-1/\sqrt{-x}&\text{ if }x<0, \\
		1/\sqrt{x}&\text{ if }x>0.
	\end{cases}
	\]
	Then $u$ is stationary entropy solution of the equation \eqref{claw} with flux $f(x,u)=xu^2$; 
	in particular, it satisfies the entropy inequality as an \emph{equality}.
\end{lemma}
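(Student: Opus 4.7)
The plan is to exploit the fact that $u$ depends only on $x$ to reduce the entropy inequality \eqref{entweak} to a distributional identity on $\R$. First I would verify admissibility: $(f\circ u)(x)=xu(x)^2$ equals $-1$ for $x<0$ and $+1$ for $x>0$, hence $f\circ u\in\L{\infty}$; and $u\in\Lloc{1}(\R)$ since $1/\sqrt{|x|}$ is locally integrable on the line. Stationarity moreover trivialises $\L{1}_{\mathbf{loc}}$-continuity in time, so the initial datum $u_0=u$ is attained.

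Next, for $\varphi\in\Cc{\infty}(\R\times[0,\infty);[0,\infty))$, set $\Phi(x):=\int_{0}^{\infty}\varphi(x,t)\,dt$. Integrating the $|u-k|\varphi_t$ term in time produces $-\int|u(x)-k|\varphi(x,0)\,dx$, which cancels exactly against the initial-data contribution on the right of \eqref{entweak} because $u_{0}=u$. Integrating the remaining terms in $t$, the entropy inequality for fixed $k\in\R$ collapses to
\[
\int_{\R}\sgn(u(x)-k)\bigl[(f\circ u)(x)-xk^{2}\bigr]\Phi'(x)\,dx\;\geq\;\int_{\R}\sgn(u(x)-k)\,k^{2}\,\Phi(x)\,dx
\]
for every non-negative $\Phi\in\Cc{\infty}(\R)$, with equality precisely when the function $F_{k}(x):=\sgn(u(x)-k)\bigl[(f\circ u)(x)-xk^{2}\bigr]$ is absolutely continuous with classical derivative $F_{k}'(x)=-\sgn(u(x)-k)\,k^{2}$ almost everywhere.

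The core of the argument is then a case analysis in $k$. For $k=0$ one reads off $F_{0}\equiv 1$, matching $-\sgn(u)\cdot 0=0$. For $k>0$, the sign $\sgn(u-k)$ flips at the two points $x=0$, where $u$ jumps from $-\infty$ to $+\infty$, and $x=1/k^{2}$, where $u(x)=k$. A piecewise computation yields $F_{k}(x)=1+xk^{2}$ on $(-\infty,0)$, $F_{k}(x)=1-xk^{2}$ on $(0,1/k^{2})$, and $F_{k}(x)=xk^{2}-1$ on $(1/k^{2},\infty)$. Differentiating piecewise gives exactly $-\sgn(u-k)k^{2}$, and continuity of $F_{k}$ across the two transition points is checked by hand: at $x=1/k^{2}$ the bracketed factor $(f\circ u)(x)-xk^{2}$ vanishes, while at $x=0$ the jump of $f\circ u$ from $-1$ to $+1$ is exactly cancelled by the simultaneous flip of $\sgn(u-k)$ from $-1$ to $+1$. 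The case $k<0$ is symmetric, with transitions at $x=0$ and $x=-1/k^{2}$.

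The main obstacle is the apparent jump of $f\circ u$ at the interface $x=0$, which would naïvely contribute a Dirac mass to the distributional derivative of $F_{k}$ and destroy even the inequality. The saving observation is that this singular contribution is perfectly cancelled by the concurrent flip of $\sgn(u-k)$ across $x=0$, so that $F_{k}$ stays continuous and its distributional derivative agrees with its classical pointwise derivative. In this heterogeneous multiplicative setting, the singular source carried by $f_{x}(x,k)$ at the degenerate interface is thus absorbed exactly into the Kružkov flux, which is what upgrades the entropy \emph{inequality} to an entropy \emph{equality}.
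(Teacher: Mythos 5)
Your proof is correct, and it takes a genuinely different route from the paper's. The paper excises a strip $[-\eps,\eps]$ around the interface, chosen so small that $\abs{u}>\abs{k}$ there (hence $\sgn(u-k)$ is constant on each side), integrates by parts only in the exterior region where $u$ is a classical solution, and then lets $\eps\to0$: the interior integral vanishes by local integrability and the two boundary terms at $\pm\eps$ converge to $\mp\varphi(0,t)$ times the constant flux values $\pm1$ and cancel. You instead integrate out the time variable first (using stationarity to cancel the initial-data term exactly), reduce the Kružkov inequality to the one-dimensional statement $\int_{\R}F_k\Phi'\,dx\geq\int_{\R}\sgn(u-k)k^2\Phi\,dx$, and then verify by an explicit case analysis in $k$ that $F_k(x)=\sgn(u(x)-k)\left[(f\circ u)(x)-xk^2\right]$ is continuous and piecewise linear with $F_k'=-\sgn(u-k)k^2$ a.e., including the crossing point $x=\pm 1/k^2$ which the paper's choice of $\eps$ deliberately avoids having to discuss. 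Your computation is exact with no limiting procedure, and it isolates the precise cancellation mechanism (the Dirac mass in $\partial_x(f\circ u)$ at $x=0$ is annihilated by the simultaneous sign flip of $\sgn(u-k)$), which is the real reason equality holds; the paper's $\eps$-excision argument is less explicit but transfers verbatim to general multiplicative fluxes $g(x)h(u)$, which is how it is reused in Theorem~\ref{generic nonuni}. One cosmetic remark: your closing sentence about ``the singular source carried by $f_x(x,k)$'' is loosely worded --- here $f_x(x,k)=k^2$ is smooth and carries no singular part; the singular object is $\partial_x(f\circ u)$ --- but this is interpretive commentary and does not affect the argument.
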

\begin{proof}
	Since $u$ is time-independent, it is trivially $\Lloc{1}$-continuous in time. Thus, it is sufficient to prove the entropy inequality for positive test functions $\varphi$ supported in the open upper-half plane. Furthermore, $f(x,u(x))=\sgn(x)$ for all $x\neq0$, hence $u$ is a classical solution of \eqref{claw} away from the interface $\{x=0\}$. Now, let $\varphi\in \Cc{\infty}(\R\times[0,\infty),[0,\infty))$ and $k \in \R$. Choose some $\eps>0$ such that $\eps^{-1}>k^2$, so that $\abs{u(x)}>\abs{k}$ for all $\abs{x}\leq\eps$. Then, by splitting up the integral in \eqref{entweak} and integrating by parts, we have that
	\begin{align*}
		E:=& \iint_{\R^2_+}\abs{u(x)-k}\varphi_t+\sgn(u(x)-k)\left\{\left[f(x,u(x))-f(x,k)\right]\varphi_x-f_x(x,k)\varphi\right\}dxdt \\
		=&\int_0^{\infty}\int_{-\eps}^{\eps}\abs{u(x)-k}\varphi_t+\sgn(u-k)\left\{\left[f(x,u)-f(x,k)\right]\varphi_x-f_x(x,k)\varphi\right\}dxdt \\
		&-\int_{0}^{\infty}\left[f(\eps,u)-f(\eps,k)\right]\varphi(\eps,t)dt-\int_{0}^{\infty}\left[f(-\eps,u)-f(-\eps,k)\right]\varphi(-\eps,t)dt.
	\end{align*}
	Now take the limit as $\eps \to 0$. Using that $f \circ u$ is constant, the integrability of integrands, and the fact that $f(\varepsilon,k)\to0$ as $\varepsilon\to0$ for any fixed $k\in\R$, we obtain:
	\[
	E=\int_{0}^{\infty}(-f(1,u(1))-f(-1,u(-1)))\varphi(0,t)dt=0.
	\]
	Since $k,\varphi$ were arbitrary, this proves our claim that $u$ is an entropy solution that satisfies the inequality \eqref{entweak} as an equality.
\end{proof}
We remark that if we define $u(x)=0$ for $x<0$ instead, the same calculation shows that $u$ would still be an admissible solution of \eqref{claw}, albeit one satisfying the entropy condition as a strict inequality. However, in both cases, $u$ does \emph{not} satisfy the integral formulation of \eqref{claw}.

\subsubsection{Non-uniqueness with one discontinuity}
Before we establish non-uniqueness of entropy solutions of \eqref{claw} even for $\L{\infty}$ initial data, we first establish it in a special case involving discontinuities, i.e. shocks. We use this as a stepping stone for the final construction. For the same flux $f(x,u)=xu^2$ as before, consider the initial datum
\begin{equation}\label{shocking nonuniqueness}
	u_0(x)=
	\begin{cases}
		-1/\sqrt{-x}&\text{ if }x<0, \\
		0&\text{ if }x>0.
	\end{cases}
\end{equation}
A similar calculation as in the proof of Lemma~\ref{stat sol} shows that this datum is a stationary entropy solution of \eqref{claw}. 

Later, in Lemma~\ref{gen riem} we show this for more general stationary solutions. However, it is not the unique entropy solution of the Cauchy problem. Define
\begin{equation}\label{sqrt RH curve}
	u(x,t)=
	\begin{cases}
		-1/\sqrt{-x}&\text{ if }x<0, \\
		1/\sqrt{x}&\text{ if }0<x<t^2/4, \\
		0&\text{ if }x>t^2/4.
	\end{cases}
\end{equation}
We can also write this function succinctly as
\[
u(x,t)=
\begin{cases}
	\operatorname{sgn}(x)/\sqrt{\abs{x}}&\text{ if }x<y(t), \\
	0&\text{ otherwise},
\end{cases}
\]
where $y(t)=t^2/4$. This function is illustrated in Figure~\ref{disco}.

We claim that $u$ is also an entropy solution of the same Cauchy problem. For test functions $\varphi$ supported away from $t=0$, this is clear from Lemma~\ref{stat sol} and the fact that $u$ satisfies the Rankine-Hugoniot condition along the curve $\gamma(t)=t^2/2$ with a well-signed jump. Since $1/\sqrt{x}$ is also integrable near zero, it follows that $\pnorm{\L{1}}{u(\cdot,t)-u_0}\to0$ as $t\to0$, and thus $u$ is an entropy solution of the Cauchy problem. In \cite{timecont}, the authors construct a solution of Burgers equation that satisfies the entropy inequality in the upper half plane but fails to achieve the initial value as $t\to0$. In our example, however, we have $\Lloc{1}$-continuous non-unique solutions.

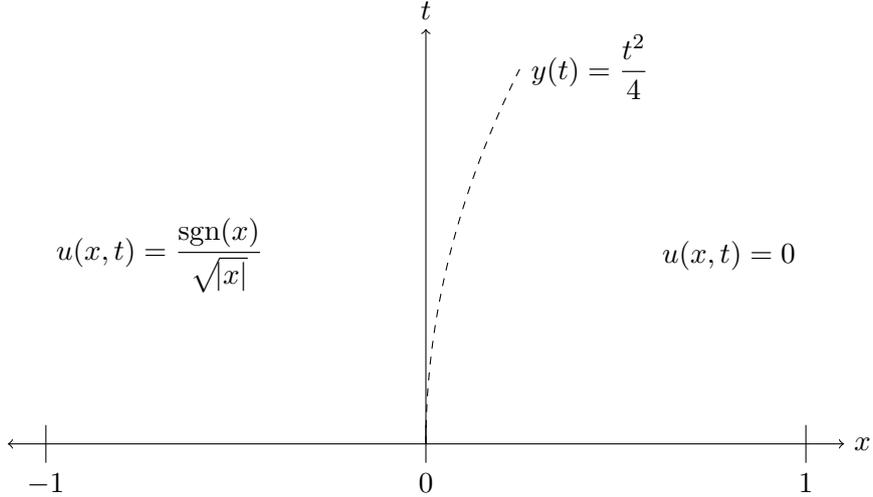
\begin{figure}[htbp]
	\centering
	\begin{tikzpicture}[scale=5]
		\def\xmin{-1}
		\def\xmax{1}
		\def\tmin{0}
		\def\tmax{1}
		
		\draw[<->] (\xmin-0.1,0) -- (\xmax+0.1,0) node[right] {$x$};
		\draw[->] (0,0) -- (0,\tmax+0.1) node[above] {$t$};
		
		\foreach \x in {-1,0,1}
		\draw (\x,0.05) -- (\x,-0.05) node[below] {$\x$};
		
		\node[right] at (1/4,1) {$y(t)=\dfrac{t^2}{4}$};
		
		\node[right] at (-1,1/2) {$u(x,t)=\dfrac{\operatorname{sgn}(x)}{\sqrt{\abs{x}}}$};
		\node[left] at (1,1/2) {$u(x,t)=0$};
		
		\draw[black, dashed] plot[domain=0:1, samples=100, variable=\t] ({pow(\t,2)/4}, \t);
	\end{tikzpicture}
	\caption{One of the infinitely many Kružkov entropy solutions to the Cauchy problem for \eqref{claw} with flux $f(x,u)=xu^2$ and initial data given by \eqref{shocking nonuniqueness}.}\label{disco}
\end{figure}

\subsubsection{Non-uniqueness from bounded initial data}
By Kruzkov's doubling-of-variables argument \cite[Theorem 1]{kruzkov}, we know that uniqueness holds for the Cauchy problem within the class of bounded entropy solutions whenever they exist. However, when the $\L{\infty}$ norm blows up for bounded initial data, we can exploit it to generate infinitely many entropy solutions on a time horizon that is large enough to allow for blow-up. Consider the initial datum
\begin{equation}\label{bounded nonuniqueness}
	u_0(x)=
	\begin{cases}
		-1/\sqrt{-x}&\text{ if }x<-1, \\
		0&\text{ if }x>-1.
	\end{cases}
\end{equation}
Since the discontinuity at $x=-1$ is entropic, and the data is a stationary solution on either side of the discontinuity, we can solve the PDE exactly by solving the Rankine-Hugoniot ordinary differential equation
\begin{align*}
	\dot{y}(t)&=\sqrt{-y(t)}, \\
	y(0)&=-1,
\end{align*}
at least as long as $y<0$. However, the solution is easily seen to be $y(t)=-(t-2)^2/4$ for $t<2$, which means that at $t=2$, the solution has precise profile \eqref{shocking nonuniqueness}, and hence the Cauchy problem \eqref{claw} with flux $xu^2$ and $\L{\infty}$ initial datum \eqref{bounded nonuniqueness} also has infinitely many entropy solutions. Figure~\ref{linfblowup} illustrates the finite-time blow-up.
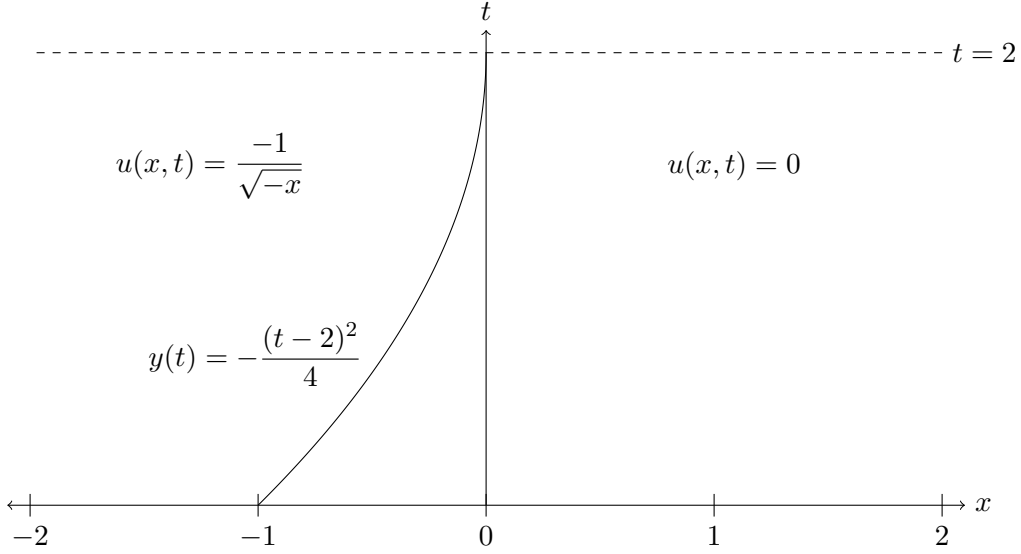
\begin{figure}[htbp]
	\centering
	\begin{tikzpicture}[scale=3]
		\def\xmin{-2}
		\def\xmax{2}
		\def\tmin{0}
		\def\tmax{2}
		
		\draw[<->] (\xmin-0.1,0) -- (\xmax+0.1,0) node[right] {$x$};
		\draw[->] (0,0) -- (0,\tmax+0.1) node[above] {$t$};
		
		\foreach \x in {-2,-1,0,1,2}
		\draw (\x,0.05) -- (\x,-0.05) node[below] {$\x$};
		
		\draw[black, dashed] (\xmax,\tmax) -- (\xmin,\tmax);
		
		\node[right] at (\xmax, \tmax) {$t=2$};
		
		\node[left] at (-1/2,2/3) {$y(t)=-\dfrac{(t-2)^2}{4}$};
		
		\node[left] at (-0.75,1.5) {$u(x,t)=\dfrac{-1}{\sqrt{-x}}$};
		\node[right] at (0.75,1.5) {$u(x,t)=0$};
		
		\draw[black] plot[domain=0:2, samples=200, variable=\t] ({-pow(\t-2,2)/4}, \t);
	\end{tikzpicture}
	\caption{The entropy solution to the Cauchy problem for \eqref{claw} with flux $f(x,u)=xu^2$ and initial data given by \eqref{bounded nonuniqueness} evolves into the profile \eqref{shocking nonuniqueness} at $t=2$, beyond which there are infinitely many entropic continuations.}\label{linfblowup}
\end{figure}

Note that there is nothing special about the quadratic exponent here, and we could have used any flux of the form $xu^s$ with $s>1$ to carry out the earlier computations and generate infinitely many entropy solutions. This is essentially due to the non-uniqueness of Cauchy problems for ODEs with merely Hölder continuous right-hand sides. For a flux of the form $xu^s$, the initial datum/stationary solution \eqref{shocking nonuniqueness} is simply
\[
u_0(x)=
\begin{cases}
	-1/\sqrt[s]{-x}&\text{ if }x<0, \\
	0&\text{ if }x>0,
\end{cases}
\]
and the non-stationary solution analogous to \eqref{sqrt RH curve} is now
\[
u(x,t)=
\begin{cases}
	-1/\sqrt[s]{-x}&\text{ if }x<0, \\
	1/\sqrt[s]{x}&\text{ if }0<x<y(t), \\
	0&\text{ if }x>y(t),
\end{cases}
\]
where
\[
y(t)=\left(\dfrac{s-1}{s}t\right)^{\frac{s}{s-1}}.
\]
Hence, even mild super-linearity can induce non-uniqueness. Finally, we remark that this phenomenon can also be extended to several space dimensions by a simple extension.
\begin{corollary}
	Consider the multidimensional scalar conservation law
	\[
	u_t+(x_1u^2)_{x_1}+\sum_{i=2}^{n}f_i(u)_{x_i}=0\text{ in }\R^n\times[0,\infty),
	\]
	where $n\geq2$ and $f_i$ are arbitrary $C^2$ functions. For initial data of the form
	\[
	u_0(x_1,\ldots,x_n)=
	\begin{cases}
		-x_1^{-1/2}&\text{ if }x_1\geq1, \\
		0&\text{ otherwise},
	\end{cases}
	\]
	there exist infinitely many solutions satisfying Kružkov's entropy inequalities.
\end{corollary}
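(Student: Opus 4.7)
The plan is to reduce the multidimensional problem to the one-dimensional non-uniqueness already established, using that both $u_0$ and the $x_1$-component of the flux depend only on $x_1$, and that the remaining flux components $f_i(u)$ for $i\geq 2$ depend only on $u$.

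I would first associate to the given datum the one-dimensional Cauchy problem $v_t + (x_1 v^2)_{x_1} = 0$ with initial condition $v_0(x_1) = -x_1^{-1/2}$ for $x_1 \geq 1$ and $v_0 \equiv 0$ otherwise, so that $u_0(x_1, \ldots, x_n) = v_0(x_1)$. This $v_0$ is exactly the reflection $x_1 \mapsto -x_1$ of the datum \eqref{bounded nonuniqueness}, and the involution $x_1 \mapsto -x_1$ is a symmetry of the equation $v_t + (x_1 v^2)_{x_1} = 0$ preserving the Kružkov entropy formulation. The earlier construction of infinitely many entropy solutions for \eqref{bounded nonuniqueness} therefore transfers verbatim, yielding an infinite family $\{v_\alpha\}$ of distinct Kružkov entropy solutions of the 1D Cauchy problem, parametrized by the entropic continuation chosen after the Rankine-Hugoniot shock starting at $x_1 = 1$ reaches the interface $x_1 = 0$ in finite time.

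For each $\alpha$, I set $u_\alpha(x_1, \ldots, x_n, t) := v_\alpha(x_1, t)$ and verify the multidimensional entropy inequality directly. Fix $\varphi \in \Cc{\infty}(\R^n \times [0, \infty), [0, \infty))$ and $k \in \R$. For each $i \geq 2$, the quantity $\sgn(v_\alpha - k)[f_i(v_\alpha) - f_i(k)]$ is independent of $x_i$, so compact support of $\varphi$ forces $\int_\R \varphi_{x_i} \, dx_i = 0$ and the $i$-th flux contribution drops out of the inequality; the corresponding source terms vanish identically since $f_i$ depends only on $u$, so $\partial_{x_i} f_i(k) = 0$. Applying Fubini in the remaining integrals, the multi-D inequality reduces, slice by slice in $(x_2, \ldots, x_n)$, to the 1D Kružkov inequality for $v_\alpha$ tested against $\varphi(\cdot, x_2, \ldots, x_n, \cdot)$, which holds by construction; the $\Lloc{1}$ initial-trace requirement transfers in the same way.

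The content of the corollary is essentially kinematic: there is no genuine obstacle beyond the (routine) verification that reflection preserves the 1D construction and that the multidimensional entropy inequality decouples across slices. Since distinct $v_\alpha$ yield distinct $u_\alpha$, this gives the claimed infinite non-uniqueness.
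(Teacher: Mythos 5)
Your proposal is correct and is essentially the paper's own argument: the paper simply declares the higher-dimensional extension trivial, and the reflection $x_1\mapsto-x_1$ of the one-dimensional family \eqref{bounded nonuniqueness}--\eqref{sqrt RH curve} followed by the slice-by-slice verification that the transverse flux contributions integrate to zero is exactly the intended content. The one soft spot, which your write-up shares with the statement itself rather than introducing, is that for genuinely arbitrary $C^2$ functions $f_i$ (already for $f_i(u)=u^2$) the composition $f_i(u_\alpha)\sim x_1^{-1}$ fails to be locally integrable near $x_1=0$ once the blow-up profile has formed, so the Fubini step --- and indeed the multidimensional Kružkov integral itself --- is only literally well defined if one restricts the growth of the $f_i$ or reads the transverse terms as iterated integrals in the order you specify.
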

Since the extension in higher dimensions is trivial, the corollary follows directly from the preceding construction.

\subsection{Non-existence of weak solutions}
Consider again the Cauchy problem for the following conservation law:
\begin{equation}\label{noweak}
	\begin{split}
		u_t+(xu^2)_x&=0, \\
		u(x,0)&=u_0(x).
	\end{split}
\end{equation}
In this section, we will once again consider initial data of the form
\begin{equation}\label{bad data}
	u_0(x)=
	\begin{cases}
		1/\sqrt{x}&\text{ if }x>0, \\
		0&\text{ if }x<0.
	\end{cases}
\end{equation}
We claim that there are no weak solutions of \eqref{noweak} in the class of locally integrable functions $u$ with $\supnorm{f\circ u}=1$, where $f(x,u)=xu^2$. Note that $\supnorm{f\circ u_0}=1$, and entropy solutions of \eqref{claw} generally satisfy a flux-maximum principle. The method of proof will be to assume that a weak solution exists, and derive a contradiction. In order to do this, we first need some lemmas on the necessary conditions that any weak solution, if it exists, must satisfy.
\begin{lemma}[Trace at the interface]\label{strongtrace}
	Let $u$ be a weak solution of \eqref{noweak}. Then, $f\circ u(x,t)$ has zero trace at the interface $x=0$, i.e. for all $T>0$:
	\[
	\lim_{x\to0\pm} \int_0^T \abs{f\circ u(x, t)} dt = 0.
	\]
\end{lemma}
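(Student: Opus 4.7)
The strategy is to exploit the sign structure of $f(x, u) = xu^2$—non-negative on $\{x > 0\}$ and non-positive on $\{x < 0\}$—together with the weak formulation of \eqref{noweak}. The plan is to derive an integral identity from the weak formulation that pins down how the $t$-integrated trace of $f \circ u$ depends on $x$, and then to combine it with the sign constraints to force both one-sided traces at $x = 0$ to vanish.

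Concretely, for small $0 < h_1 < h_2$ I would apply the weak formulation with a test function $\psi(t)\chi_\eps(x)$, where $\chi_\eps$ smoothly approximates the indicator of $(h_1, h_2)$ and $\psi$ approximates that of $[0, T]$. Using Fubini and the $\L{\infty}$ bound on $f \circ u$, sending the smoothing parameters to zero yields, for a.e.\ $h_1 < h_2$, the identity
\[
G(h_2) - G(h_1) = -\int_{h_1}^{h_2}\bigl[u(x, T) - u_0(x)\bigr] dx, \qquad G(h) := \int_0^T f\circ u(h, t)\, dt.
\]
Because $u(\cdot, T), u_0 \in \Lloc{1}$, the right-hand side vanishes as $h_1, h_2 \to 0^+$, so $G$ (modified on a null set) is absolutely continuous on $(0, \infty)$ and extends continuously to $h = 0$, with $\alpha^+ := \lim_{h \to 0^+} G(h)$ existing and $\alpha^+ \geq 0$ since $f \geq 0$ on $\{x > 0\}$. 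A mirror-image argument yields $\alpha^- := \lim_{h \to 0^-} G(h) \leq 0$. Repeating the same procedure on the symmetric interval $(-h, h)$ produces $G(h) - G(-h) \to 0$, hence $\alpha^+ = \alpha^-$. Combined with the sign constraints this forces $\alpha^+ = \alpha^- = 0$, and since $\sgn(f \circ u(h, t)) = \sgn(h)$ for $h \neq 0$, we conclude $\int_0^T |f \circ u(h, t)|\, dt = |G(h)| \to 0$ as $h \to 0^\pm$.

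The principal obstacle is justifying the trace identity rigorously for general weak (not entropy) solutions, since Lemma~\ref{trace} cannot be invoked directly. The remedy is that no pointwise trace in the strong sense is needed: the weak formulation with mollified cutoffs defines $G(h)$ only at Lebesgue points in $h$, a full-measure set, and the identity itself upgrades $G$ to an absolutely continuous function on each side of the interface which extends continuously to $0$. Beyond the hypotheses $u \in \Lloc{1}$ and $f \circ u \in \L{\infty}$, no further regularity of the weak solution is used; the sign constraint on $f$ does the rest.
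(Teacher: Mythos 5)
Your proof is correct, and it rests on the same two pillars as the paper's: testing the weak formulation of \eqref{noweak} against functions concentrated near $x=0$, and the fact that $f\circ u = xu^2$ has the sign of $x$, so the two one-sided flux contributions cannot cancel. The route is genuinely different in its organization, though. The paper uses a single one-parameter family $\varphi_\eps(x,t)=\eps\eta_\eps(x)\psi(t)$ straddling the interface, with $\eta'(x)\sgn(x)$ of one sign so that the term $\int\!\!\int \eps\eta_\eps'(x)\,f\circ u\,\psi\,dx\,dt$ is itself signed; since the companion term $\int\!\!\int \eps\eta_\eps(x)\,u\,\psi'\,dx\,dt$ vanishes as the support shrinks (absolute continuity of the integral for $u\in\Lloc{1}$), the signed term must vanish too, giving the trace in one stroke. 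You instead derive a flux-balance identity on slabs $(h_1,h_2)\times(0,T)$, upgrade $G(h)=\int_0^T f\circ u(h,t)\,dt$ to an absolutely continuous function on each side of the interface, and close the argument by matching the two one-sided limits via the identity on $(-h,h)$ and then invoking the signs. Your version buys a stronger intermediate conclusion (existence of the one-sided limits of $G$, not merely that a signed average tends to zero) at the cost of a few technical caveats you should make explicit: $u(\cdot,T)$ is only defined for a.e.\ $T$ (Lebesgue points of $t\mapsto u(\cdot,t)$), so the identity holds for a dense set of $T$ and is extended to all $T$ using the uniform bound on $f\circ u$ and monotonicity of $T\mapsto\int_0^T\abs{f\circ u(h,t)}\,dt$; likewise $G(h)$ is a priori defined only at Lebesgue points in $h$, so the limit in the lemma is an essential limit --- but that is exactly the sense in which the paper's own statement must be read, so nothing is lost.
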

\begin{proof}
	Consider the test function $\varphi_{\eps}(x,t)=\eps\eta_{\eps}(x)\psi(t)$, where $\psi\in\Cc{\infty}([0,\infty))$ and $\eta_{\eps}$ is from a family of approximate identities, with $\eta$ supported in $[-1,1]$ and $\eta^{\prime}(x)\operatorname{sgn}(x)\geq0$. If $u$ is a weak solution, we can use $\varphi_{\eps}$ as a test function and obtain, for all $\eps>0$:
	\[
	\int_{0}^{\infty}\psi^{\prime}(t)\int_{-\eps}^{\eps}\eps\eta_{\eps}(x)u(x,t)dxdt=-\int_{0}^{\infty}\psi(t)\int_{-\eps}^{\eps}\eps\eta_{\eps}^{\prime}(x)f\circ u(x,t)dxdt.
	\]
	Since $f\circ u$ is non-negative (respectively, non-positive) if $x>0$ (respectively, $x<0$), and the term on the left-hand side vanishes as $\eps\to0$ (since the integrand is absolutely integrable and the domain shrinks in measure to zero), the term on the right-hand side must vanish too. Since $\psi$ was arbitrary, this concludes the proof of the strong trace property.
\end{proof}
The following corollary now follows immediately from Lemma~\ref{strongtrace}.
\begin{corollary}\label{restriction}
	Let $u$ be a weak solution of \eqref{noweak} with initial data $u_0$ such that $u_0(x)=0$ for $x<0$. Then, the function $\tilde{u}$ defined as
	\[
	\tilde{u}(x,t)=
	\begin{cases}
		u(x,t),&\text{ if }x\geq0, \\
		0,&\text{ if }x<0,
	\end{cases}
	\]
	is also a weak solution of the same Cauchy problem.
\end{corollary}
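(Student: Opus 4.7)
The plan is to verify the weak formulation for $\tilde{u}$ by restricting the weak formulation for $u$ to the right half-plane $\{x>0\}$ via a smooth cut-off, and to absorb the boundary contribution at $\{x=0\}$ using the trace property of Lemma~\ref{strongtrace}. Since $\tilde u \equiv 0$ and $f(x,\tilde u)\equiv 0$ on $\{x<0\}$, and $u_0\equiv 0$ on $\{x<0\}$, the weak formulation for $\tilde u$ against an arbitrary test function $\varphi\in\Cc{\infty}(\R\times[0,\infty))$ reduces to
\[
\int_0^\infty\!\!\int_0^\infty u\,\varphi_t + f(x,u)\,\varphi_x\,dx\,dt + \int_0^\infty u_0(x)\varphi(x,0)\,dx = 0,
\]
so the task is exactly to justify this half-plane identity from the full-plane identity for $u$.

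First, I would fix a smooth one-sided cut-off $\eta\in \Ck{\infty}(\R)$ with $\eta(s)=0$ for $s\le 0$, $\eta(s)=1$ for $s\ge 1$, and set $\eta_\eps(x)=\eta(x/\eps)$. Since $\varphi\eta_\eps$ is an admissible test function (compactly supported, smooth, vanishing for $x\le 0$), using it in the weak formulation for $u$ and applying the product rule gives
\[
\iint_{\R^2_+}\!\!\!\big[u\,\varphi_t+f(x,u)\,\varphi_x\big]\eta_\eps\,dx\,dt + \iint_{\R^2_+}\!\!\!f(x,u)\,\varphi\,\eta_\eps'(x)\,dx\,dt + \int_\R u_0(x)\varphi(x,0)\eta_\eps(x)\,dx = 0.
\]
Then I would pass to the limit as $\eps\to 0$ term by term.

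The first and third terms are the routine part: by dominated convergence (using $u\in\Lloc{1}$, the uniform bound on $f\circ u$, local integrability of $u_0$, and compact support of $\varphi$), they converge to the half-plane integrals $\iint_{x>0} u\varphi_t+f(x,u)\varphi_x\,dx\,dt$ and $\int_{x>0} u_0\varphi(x,0)\,dx$ respectively. The middle term is where Lemma~\ref{strongtrace} does the work: $\eta_\eps'$ is supported in $[0,\eps]$ with $|\eta_\eps'|\le \|\eta'\|_\infty/\eps$, so for any fixed $T$ containing the time-support of $\varphi$,
\[
\left|\iint f(x,u)\varphi\,\eta_\eps'\,dx\,dt\right| \le \frac{\|\eta'\|_\infty\|\varphi\|_\infty}{\eps}\int_0^{\eps}\!\!\int_0^T \abs{f\circ u(x,t)}\,dt\,dx,
\]
and the inner $t$-integral tends to $0$ as $x\to 0+$ by Lemma~\ref{strongtrace}, so the average over $[0,\eps]$ is $o(1)$. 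Thus the middle term vanishes in the limit.

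I expect no serious obstacle: the only subtle point is the interplay between the strong trace statement (limit of an integral) and the need to bound an average over $[0,\eps]$, but these are equivalent up to the continuity of $x\mapsto \int_0^T|f\circ u(x,t)|\,dt$ at $0+$ provided by the lemma. Combining the three limits yields exactly the half-plane identity displayed above, which is the weak formulation for $\tilde u$; the symmetric argument (with a left-sided cut-off on $\{x<0\}$) is not needed since $\tilde u$ and $f(x,\tilde u)$ vanish identically there.
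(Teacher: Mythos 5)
Your argument is correct and is precisely the reasoning the paper intends: the paper offers no written proof, asserting only that the corollary "follows immediately" from Lemma~\ref{strongtrace}, and your cut-off computation (testing against $\varphi\eta_\eps$, killing the boundary term $\iint f(x,u)\varphi\,\eta_\eps'\,dx\,dt$ via the vanishing trace, and noting that $\tilde u$, $f(x,\tilde u)$ and $u_0$ all vanish for $x<0$) is exactly the standard way to make that immediacy rigorous. No gaps.
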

Hence, we will assume from now on that, without loss of generality, the weak solution $u$ is supported in the domain $\{x\geq0\}$. If $u$ agrees with the entropy solution outside a compact domain, we have the following result.
\begin{lemma}[Integral evolution]\label{int evo}
	Let $u$ be a weak solution of \eqref{noweak} with initial data \eqref{bad data} such that $u(x,t)=-x^{-1/2}$ for all $x>K,t\leq T$ for some positive constants $K,T$. Then, for almost all $0\leq t\leq T$:
	\[
	\int_{0}^{\infty}u(x,t)+\dfrac{1}{\sqrt{x}}dx=-t.
	\]
\end{lemma}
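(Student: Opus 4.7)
The approach is to test the weak formulation of \eqref{noweak} against a separable cutoff $\varphi(x,s)=\psi_\tau(s)\chi_{R,\varepsilon}(x)$ and pass to the limit $\tau\to 0$ to extract the conservation identity. Here $\chi_{R,\varepsilon}\in\Cc{\infty}(\R)$ is compactly supported in $[0,R+1]$, equal to $1$ on $[\varepsilon,R]$ with $R>K$ and $\varepsilon>0$ small; and $\psi_\tau$ is a smooth approximation to $\mathbf{1}_{[0,t]}(s)$. By Corollary~\ref{restriction}, I may assume $u$ is supported in $\{x\geq 0\}$, so cutting off near $x=0$ from the right loses no mass.

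Sending $\tau\to 0$ so that $\psi_\tau^\prime\to-\delta_t$ distributionally, which is legitimate for a.e.\ $t\in[0,T]$ by Lebesgue differentiation of the map $t\mapsto\int u(x,t)\chi_{R,\varepsilon}(x)\,dx$, the weak formulation
\[
\iint u\varphi_s+xu^2\varphi_x\,dx\,ds+\int u_0\varphi(\cdot,0)\,dx=0
\]
reduces to
\[
\int u(x,t)\chi_{R,\varepsilon}\,dx-\int u_0\chi_{R,\varepsilon}\,dx=\int_0^t\!\!\int xu^2\,\chi_{R,\varepsilon}^\prime(x)\,dx\,ds.
\]
The derivative $\chi_{R,\varepsilon}^\prime$ is supported on $[0,\varepsilon]\cup[R,R+1]$. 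On the right piece, the hypothesis $u(x,s)=-x^{-1/2}$ forces $xu^2\equiv 1$, so that contribution equals $\int_0^t\int_R^{R+1}\chi_{R,\varepsilon}^\prime\,dx\,ds=-t$. On the left piece, Fubini gives
\[
\left|\int_0^t\!\!\int_0^\varepsilon xu^2\,\chi_{R,\varepsilon}^\prime\,dx\,ds\right|\leq\int_0^\varepsilon\chi_{R,\varepsilon}^\prime(x)\left(\int_0^T|xu^2(x,s)|\,ds\right)dx,
\]
and this tends to $0$ as $\varepsilon\to 0$ by Lemma~\ref{strongtrace}, since the inner integral vanishes at $x=0^+$ while $\chi_{R,\varepsilon}^\prime$ integrates to $1$ on $[0,\varepsilon]$.

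To close the argument, I rewrite the left-hand side as $\int(u(x,t)+x^{-1/2})\chi_{R,\varepsilon}\,dx$ by matching the initial stationary profile with the asymptotic profile at $x=\infty$, and then invoke that $u+x^{-1/2}\equiv 0$ on $(K,\infty)$ by hypothesis. With $R>K$ fixed and $\varepsilon\to 0$, dominated convergence (the integrand is supported in $[0,K]$) yields $\int_0^\infty(u(x,t)+x^{-1/2})\,dx=-t$. The main obstacle is controlling the left boundary contribution, which would be the natural place for the argument to fail: it is precisely the strong trace from Lemma~\ref{strongtrace}, encoding the zero-flux behavior at the degenerate interface, that makes the integration-by-parts close; without it, no conserved quantity of this form could be extracted.
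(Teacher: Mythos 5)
Your proof is correct and follows essentially the same route as the paper's: both test the weak formulation against a (mollified) indicator of the rectangle $[0,K+1]\times[0,t]$ at a Lebesgue point in time, use the zero-trace property of Lemma~\ref{strongtrace} to kill the flux through the degenerate interface $x=0^+$, and read off $-t$ from the constant flux $xu^2\equiv1$ through the outer boundary where $u=-x^{-1/2}$. The only differences are cosmetic --- you make the separable cutoff and the two boundary-layer limits explicit where the paper compresses this into one line --- and, like the paper's own proof, you implicitly work with the intended sign $u_0(x)=-x^{-1/2}$ rather than the $+x^{-1/2}$ printed in \eqref{bad data}.
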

\begin{proof}
	Since the equality is trivially true for $t=0$, we will only consider the case of $t>0$. Furthermore, by Corollary~\ref{restriction}, we may assume without loss of generality that $u-u_0$ has compact support in $\mathbb{R}\times[0,T]$. Consider the domain $[0,K+1]\times[0,t]$, where $t<T$. By approximating the characteristic function of this domain, and assuming that $t$ is a Lebesgue point of $u:[0,T]\to\Lloc{1}(\R)$, we have that
	\[
	\int_{0}^{t}f\circ u(K+1,s)-f\circ u(0+,s)ds+\int_{0}^{K+1}u(x,t)-\dfrac{-1}{\sqrt{x}}dx=0.
	\]
	From the strong trace property of Lemma~\ref{strongtrace} and the fact that $u-u_0=0$ for $x>K$, the lemma follows, since $f\circ u_0(K+1)=1$.
\end{proof}
Next, we localise the weak solution in the following lemmas, showing that the far-field condition can be assumed without much loss of generality.
\begin{lemma}\label{localise}
	Let $\Omega_T=\R\times[0,T]$, and let $u$ be a weak solution of \eqref{noweak} with the initial data \eqref{bad data} such that $\supnorm{f\circ u}<\infty$, and $u$ has well-defined spatial traces in the domain $\{x>0\}$. Let $\supnorm{f\circ u}=\overline{C}$. Then, there exists $K>0$ and a (possibly different) weak solution $\tilde{u}$ such that $\tilde{u}(x,t)=-x^{-1/2}$ for all $x>K$.
\end{lemma}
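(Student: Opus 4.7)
The plan is to construct $\tilde{u}$ by gluing the given $u$ (to the left of a curve $\gamma(t)$) with the stationary profile $-x^{-1/2}$ (to the right of $\gamma$) across a Rankine--Hugoniot shock, where the $\mathbf{L}^{\infty}$ bound on $f\circ u$ is precisely what guarantees that the shock speed is non-positive and hence that the far-field structure is preserved in time.

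From $\supnorm{f\circ u}=\overline{C}$, I first extract the pointwise bound $|u(x,t)|\leq\sqrt{\overline{C}/x}$ for $x>0$, valid wherever the spatial traces exist. For any candidate vertical interface $x_*>0$, the Rankine--Hugoniot shock speed between the left trace $u(x_*-,t)$ and the right state $-1/\sqrt{x_*}$ factors cleanly as
\[
s(x_*,t)=\sqrt{x_*}\bigl(\sqrt{x_*}\,u(x_*-,t)-1\bigr),
\]
which is non-positive under the bound when $\overline{C}\leq 1$ (the relevant case, since the initial datum already saturates $f\circ u_0=1$ on $\{x>0\}$; if $\overline{C}>1$, the analogous computation with right state $-\sqrt{\overline{C}}/\sqrt{x_*}$ yields the same sign). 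This is the crucial computation: it says that a rightward-moving shock into the stationary profile $-1/\sqrt{x}$ is forbidden by the flux bound on $u$.

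Next, fix $K_0>T^2$ and solve the Rankine--Hugoniot ODE $\dot\gamma(t)=s(\gamma(t),t)$ with $\gamma(0)=K_0$. Using $s\leq 0$ and $|s|\leq 2\sqrt{\gamma}$, the resulting Lipschitz curve $\gamma$ satisfies $(\sqrt{K_0}-t)^2\leq\gamma(t)\leq K_0$ for $t\in[0,T]$, and in particular stays strictly positive. Setting $K=K_0$ and
\[
\tilde u(x,t)=
\begin{cases}
u(x,t) & \text{if } x<\gamma(t),\\
-1/\sqrt{x} & \text{if } x>\gamma(t),
\end{cases}
\]
one has $\tilde u(x,t)=-x^{-1/2}$ for all $x>K$ by construction. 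That $\tilde u$ is a weak solution of the PDE reduces to three checks: the PDE holds in the interior of each region (immediate, since $-1/\sqrt{x}$ is stationary); the Rankine--Hugoniot condition along $\gamma$ holds by construction; and the distributional initial condition is inherited, with the initial datum of $\tilde u$ equal to $u_0$ on $\{x<K_0\}$ and to $-1/\sqrt{x}$ on $\{x>K_0\}$, glued by the entropic stationary shock at $K_0$.

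The main obstacle is setting up the Rankine--Hugoniot ODE rigorously, since $s(\gamma(t),t)$ involves the trace of $u$ along the moving curve $\gamma$ while $u$ is only assumed to have traces along vertical lines. One approach is to recast the ODE as a Filippov-type differential inclusion, with right-hand side the convex hull of Rankine--Hugoniot speeds against the essential range of $u$ near the interface, and invoke a standard existence theorem; the sign analysis above then guarantees every selection is non-positive. Alternatively, one can approximate $\gamma$ by piecewise-linear curves on a fine time-grid using the vertical trace values at the grid points, and pass to a Lipschitz limit while preserving the non-positivity of $s$ and hence the far-field identity $\tilde u(x,t)=-x^{-1/2}$ for $x>K$.
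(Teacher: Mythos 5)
Your construction coincides with the paper's: both glue $u$ on $\{x<\gamma(t)\}$ to the stationary profile $-x^{-1/2}$ on $\{x>\gamma(t)\}$ along a Rankine--Hugoniot curve launched from a large $x_0=K_0$, and the paper's speed $Q(x,t)=\frac{f\circ u(x,t)-1}{u(x,t)+x^{-1/2}}$ (with the degenerate value $-2x^{1/2}$ when $u=-x^{-1/2}$) is exactly your factored $s=\sqrt{x}\,(\sqrt{x}\,u(x-,t)-1)$, which indeed extends continuously through the degenerate case. Where you genuinely diverge is the key step that $\gamma$ remains in $(0,\infty)$ on $[0,T]$: the paper argues by contradiction, observing that if $\gamma(t^*)=0$ then $\tilde u$ would violate the integral identity of Lemma~\ref{int evo} on $\Omega_{t^*}$, whereas you derive the two-sided speed bound $-2\sqrt{\gamma}\leq\dot\gamma\leq 0$ directly from $\supnorm{f\circ u}\leq 1$ and conclude $\gamma(t)\geq(\sqrt{K_0}-t)^2>0$ once $K_0>T^2$. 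Your route is more self-contained (it does not pass through Lemma~\ref{int evo}) and quantitative, and you also make explicit the well-posedness issue for the Rankine--Hugoniot ODE (the speed involves traces of $u$ along a moving curve), which the paper treats as immediate; the Filippov/ piecewise-linear regularisation you sketch is a reasonable way to close that gap. One small correction: your fallback for $\overline{C}>1$, replacing the right state by $-\sqrt{\overline{C}}/\sqrt{x}$, would produce a far field different from the stated $-x^{-1/2}$; in that regime you should keep the right state $-x^{-1/2}$ and use only the two-sided bound $\abs{s}\leq(\sqrt{\overline{C}}+1)\sqrt{\gamma}$ to confine $\gamma$ to a compact subset of $(0,\infty)$ on $[0,T]$, then set $K=\sup_{[0,T]}\gamma$. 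Since the theorem this lemma feeds into assumes $\supnorm{f\circ u}\leq 1$, your main case is the relevant one, and your remark that $\tilde u$ carries a modified datum for $x>K_0$ is consistent with the lemma's allowance for a ``possibly different'' weak solution.
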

\begin{proof}
	Without loss of generality, consider the left-continuous representative of $u$. Define the function $Q$ as follows:
	\[
	Q(x,t)=
	\begin{cases}
		\dfrac{f\circ u(x,t)-1}{u(x,t)+x^{-1/2}},&\text{ if }u(x,t)\neq-x^{-1/2}, \\[2ex]
		-2x^{1/2},&\text{ otherwise}.
	\end{cases}
	\]
	Then, if $\gamma(t)$ is the solution of the initial value problem
	\[
	\begin{split}
		\dot{\gamma}(t)&=Q(\gamma(t),y), \\
		\gamma(0)&=x_0>0,
	\end{split}
	\]
	we can define a new solution $\tilde{u}$ that satisfies the far-field criterion of Lemma~\ref{int evo} on any time interval $[0,T]$ where $\gamma(t)>0$ for all $T>0$ as follows:
	\[
	\tilde{u}(x,t)=
	\begin{cases}
		u(x,t)&\text{ if }x\leq\gamma(t), \\
		-x^{-1/2}&\text{ if }x>\gamma(t).
	\end{cases}
	\]
	Since the flux $f(x,u)=xu^2$ satisfies \eqref{LL}, the curve $\gamma$ is uniformly Lipschitz. Since it is precisely either a Rankine-Hugoniot curve or a genuine characteristic, it is trivial to see that $\tilde{u}$ is a weak solution. 
	
	We claim that $\gamma(t)>0$ for all $t\in[0,T]$. Let $\gamma(t)>0$ for $t<t^*$ and $\gamma(t^*)=0$ for some $t^*\in[0,T]$. Then $\tilde{u}$ is a weak solution on $\Omega_{t^*}$. However, $\tilde{u}$ must violate the integral evolution identity of Lemma~\ref{int evo}, which is a contradiction. Hence, $\gamma(t)>0$ for all $t\in[0,T]$, and we are done.
\end{proof}
With this lemma, we can restrict ourselves to the case of weak solutions satisfying the far-field criterion of Lemma~\ref{int evo}. Hence, we have the following theorem on non-existence of weak solutions.
\begin{theorem}
	Let $u\in\Lloc{1}(\Omega_T)$ such that $u(x,0)=u_0(x)$, where $u_0$ is defined as in \eqref{bad data} and $t=0$ is a Lebesgue point of $u:[0,T]\to\Lloc{1}(\R)$. If $\supnorm{f\circ u}\leq1$, then $u$ \emph{cannot} be a weak solution of the Cauchy problem \eqref{noweak}.
\end{theorem}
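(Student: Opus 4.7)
The plan is to suppose, for contradiction, that some $u\in\Lloc{1}(\Omega_T)$ is a weak solution of \eqref{noweak} satisfying the stated hypotheses, and then chain together the preceding lemmas to extract a signed integral identity that contradicts the flux bound. Since $u_0$ vanishes on $\{x<0\}$, Corollary~\ref{restriction} lets me assume without loss of generality that $u$ is supported in $\{x\geq0\}$. Lemma~\ref{localise} then produces a (possibly different) weak solution $\tilde{u}$ which agrees with the stationary profile $-x^{-1/2}$ in the far field $\{x>K\}$ for some $K>0$. The gluing is performed along a Rankine-Hugoniot curve $\gamma(t)>0$, and on the modified region $f\circ\tilde{u}(x,t)=x\cdot x^{-1}=1$, so the flux bound $\supnorm{f\circ\tilde{u}}\leq1$ is preserved.

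With the far-field condition of Lemma~\ref{int evo} now in place, I would apply that lemma to $\tilde{u}$ to extract the integral identity
\[
\int_0^\infty\left[\tilde{u}(x,t)+\frac{1}{\sqrt{x}}\right]dx=-t
\]
for almost every $t\in(0,T]$, so the left-hand side is strictly negative for positive times. The contradiction then comes from a pointwise consequence of the flux bound: $\supnorm{f\circ\tilde{u}}\leq1$ forces $x\tilde{u}(x,t)^2\leq1$ almost everywhere in $\{x>0\}$, and hence $|\tilde{u}(x,t)|\leq x^{-1/2}$, so in particular $\tilde{u}(x,t)+x^{-1/2}\geq0$ almost everywhere. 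Integrating this pointwise inequality yields a non-negative left-hand side in the identity above, which is incompatible with the strictly negative right-hand side for any $t>0$, and no such weak solution can exist.

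The main obstacle I anticipate is verifying the hypotheses of Lemma~\ref{localise}, which require well-defined spatial traces for $u$ throughout $\{x>0\}$. This is not automatic for a generic $\Lloc{1}$ function, but the combination of the flux bound $xu^2\leq1$ and the structure of the weak formulation should yield enough regularity (for example, local $\operatorname{BV}$ regularity away from the degeneracy $\{x=0\}$, as in the argument of Lemma~\ref{trace}) to produce the required traces along the non-characteristic curve $\gamma$. A secondary care point is checking that the Lebesgue-point hypothesis at $t=0$ survives the gluing procedure, so that the integral identity genuinely extends down to $t\to0^+$. Once these regularity issues are secured, the rest of the argument is a direct concatenation of the preceding lemmas against the elementary pointwise bound $\tilde{u}+x^{-1/2}\geq0$.
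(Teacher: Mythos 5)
Your proposal is correct and follows essentially the same route as the paper: reduce to a solution supported in $\{x\geq0\}$ with the far-field profile via Corollary~\ref{restriction} and Lemma~\ref{localise}, then play the integral identity of Lemma~\ref{int evo} against the pointwise consequence $u+x^{-1/2}\geq0$ of the flux bound. Your additional remarks on verifying the trace hypothesis of Lemma~\ref{localise} and the preservation of the flux bound under the gluing are points the paper's own proof passes over silently, but they do not change the argument.
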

\begin{proof}
	Note that $\supnorm{f\circ u_0}=1$ and therefore the inequality condition is really an equality.
	
	Suppose, if possible, that $u$ is a weak solution of \eqref{noweak}. By the Lemmas~\ref{strongtrace} and \ref{localise}, we may assume that $u(x,t)-u_0(x)$ is zero for $x\in[0,K]^c$ for some $K$. That is, we may assume without loss of generality that $u$ satisfies the far-field criterion of Lemma~\ref{int evo}. However, if $\supnorm{f\circ u}\leq1$, then for $x>0$ we have that
	\[
	u(x,t)\geq-\dfrac{1}{\sqrt{x}}\implies u(x,t)+\dfrac{1}{\sqrt{x}}\geq0,
	\]
	but this is precisely the integrand in Lemma~\ref{int evo}. Since the integral, however, must be negative at all positive times, this is a contradiction. Hence, $u$ cannot be a weak solution of \eqref{noweak} with initial data $u_0$ if $\supnorm{f\circ u}\leq1$.
\end{proof}

\subsection{A sufficient condition for blow-up of entropy solutions}\label{blowup}
Consider a general flux of the form
\begin{equation}\label{general flipping flux}
	f(x,u)=g(x)h(u),
\end{equation}
where $h,g$ satisfy the structural assumptions outlined in section~\ref{ass}. Note that the flux $f(x,u)=xu^2$ considered earlier is a special case of such a flux. The characteristic system \eqref{char ode} reduces to
\begin{align}
	\dot{q}(t)&=g(q)h^{\prime}(p),\label{genflux char q} \\
	\dot{p}(t)&=-g^{\prime}(q)h(p).\label{genflux char p}
\end{align}
For fluxes of this form, we have the following result.
\begin{theorem}\label{vander}
	Consider the Cauchy problem \eqref{claw} for a flux with multiplicative heterogeneity of the form \eqref{general flipping flux}. If, for some zero $x_0$ of $g$ we have that $g^{\prime}(x_0)\neq0$, then there exist compactly supported initial data $u_0$ such that the $\L{\infty}$ norm of the entropy solution $u$ blows up in finite time. In particular, we can also choose  $u_0\in \Cc{1}(\R)$.
\end{theorem}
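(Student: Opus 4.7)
The plan is to lift the explicit analysis of Theorem~\ref{l inf blowup} to general fluxes of the form \eqref{general flipping flux}. After translating so that $x_0 = 0$ and, if necessary, replacing $g$ by $-g(-\cdot)$ (which preserves the structural assumptions of section~\ref{ass}), I may assume $g(0) = 0$ and $g'(0) > 0$. There is then a neighbourhood $(-\delta, \delta)$ on which $g(x)$ has the same sign as $x$ and $g$ is bounded away from $0$ outside any smaller neighbourhood of the origin.

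I first reproduce the blow-up along a stationary characteristic. Since $g(0) = 0$, the curve $q \equiv 0$ solves \eqref{genflux char q}, and along it the companion equation \eqref{genflux char p} reduces to the autonomous ODE $\dot p = -g'(0)\, h(p)$ with $p(0) = p_0$. Taking $p_0 < 0$, \eqref{S} and \eqref{SC} give $h(p) > 0$ for all $p < 0$, so $p(t)$ is strictly decreasing, and by separation of variables it reaches $-\infty$ at
\[
T^* = \frac{1}{g'(0)} \int_{-\infty}^{p_0} \frac{dp}{h(p)}.
\]
By \eqref{CG}, $h(p) \geq C |p|^{\eta + \varepsilon}$ for $|p|$ large with $\eta + \varepsilon > 1$, so the integrand decays strictly faster than $|p|^{-1}$ and $T^* < \infty$.

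Next, to realise this blow-up by an entropy solution, I choose $u_0 \in \Cc{1}(\R)$ supported in $[-\delta, \delta]$, with $u_0(0) = p_0$, non-increasing on $[-\delta, 0]$, non-decreasing on $[0, \delta]$, and smoothly tapered to zero at $\pm \delta$. The front tracking construction of section~\ref{ft approx} provides a Kružkov entropy solution $u$ on $\R \times [0, \infty)$, and the goal is to show that on $(-\delta, \delta) \times [0, T^*)$ it is classical and determined by the characteristic flow. Since $f(q, p) = g(q)\, h(p)$ is conserved along characteristics (by \eqref{f conserved}), each non-trivial characteristic starting at $q_0 \in (-\delta, \delta) \setminus \{0\}$ lies on a level set $\{g(q) h(p) = c\}$; because $g$ is strictly monotone on each side of $0$ and $h|_{(-\infty, 0]}$ is a diffeomorphism onto $[0, \infty)$, this level set is the graph of a function $p = P(q)$ meeting the stationary trajectory only asymptotically as $q \to 0$ and $|p| \to \infty$. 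The monotonicity of $u_0$ orders the associated $c$-values monotonically in $q_0$, and I expect to deduce non-crossing of characteristics in $(q, t)$-space on $[0, T^*)$ by a phase-plane argument mimicking the ratio estimate from the proof of Theorem~\ref{l inf blowup}. Once non-crossing is established, $u$ is classical on $(-\delta, \delta) \times [0, T^*)$ with $u(0, t) = p(t)$, so $\supnorm{u(\cdot, t)} \to \infty$ as $t \to T^*$, while the trivial extension by $0$ outside $[-\delta, \delta]$ is admissible by the finite-speed-of-propagation consequence of \eqref{LL} noted in section~\ref{ass}.

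The main obstacle is the non-crossing assertion. In the case $g(x) = x$, $h(u) = u^2$ of Theorem~\ref{l inf blowup}, characteristics were available in closed form and non-crossing followed from the identity $q_1(t)/q_2(t) = (q_1(0)/q_2(0))\bigl((tp_1(0)+1)/(tp_2(0)+1)\bigr)^2$. In the general case such explicit formulas are unavailable, and the proof will instead have to extract the same geometric information from the product structure of the conserved Hamiltonian $g(q)\, h(p)$ together with the ordering imposed by the monotonicity of $u_0$; making this rigorous uniformly for all pairs $(g, h)$ satisfying \eqref{R}--\eqref{CG} is the step I expect to require the most care.
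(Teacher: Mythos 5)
Your overall strategy coincides with the paper's: reduce to $x_0=0$, $g'(0)>0$, observe that $q\equiv0$ is a characteristic along which $\dot p=-g'(0)h(p)$ drives $p$ to $-\infty$ in finite time, and conclude via the conservation of $f=g(q)h(p)$ along characteristic trajectories. Your derivation of the blow-up time $T^*=\tfrac{1}{g'(0)}\int_{-\infty}^{p_0}h(p)^{-1}\,dp<\infty$ from \eqref{CG} is correct (the paper instead compares with a Riccati equation). However, there is a genuine gap: the non-crossing of characteristics, which you yourself describe as ``the step I expect to require the most care,'' is never actually proved. A plan to ``extract the geometric information from the product structure of the conserved Hamiltonian'' is not an argument, and this step is precisely the mathematical content of the theorem --- without it you cannot assert that the solution is classical up to $T^*$, nor that the value carried by the stationary characteristic is attained by the entropy solution.

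The paper closes this step with a concrete contradiction argument. Take two characteristics with $0<q_1(0)<q_2(0)$ and suppose they first meet at a time $\tau$ with $q_1(\tau)=q_2(\tau)>0$. Comparing speeds at the first meeting time gives $\dot q_1(\tau)\geq\dot q_2(\tau)$, hence $h'(p_1(\tau))\geq h'(p_2(\tau))$ and so $p_1(\tau)\geq p_2(\tau)$ by \eqref{SC}; substituting into the conserved quantity $g(q_i(\tau))h(p_i(\tau))=g(q_i(0))h(p_i(0))$, using $g(q_1(\tau))=g(q_2(\tau))$ and the monotonicity of $g$ near $x_0$ and of $h$ on the negative half-line, produces an ordering of $g(q_1(0))$ and $g(q_2(0))$ incompatible with $q_1(0)<q_2(0)$. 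Crucially, this comparison works cleanly because the paper starts from \emph{piecewise constant} data $\overline u_0\equiv-m$ on $[-K,K]$, so that $h(p_1(0))=h(p_2(0))$ cancels; the rarefaction-fan characteristics at $\pm K$, for which instead $q_1(0)=q_2(0)$ but $p_1(0)\neq p_2(0)$, are handled by a separate, symmetric cancellation. Your choice of smooth monotone data makes $q_i(0)$ and $p_i(0)$ vary simultaneously, which mixes these two cases and makes the comparison substantially messier. The paper sidesteps this by evolving the discontinuous data for a short time $\eps$ and taking $u_0=\overline u(\cdot,\eps)\in\Cc{1}(\R)$ as the initial datum; you should adopt this device rather than attempting the non-crossing argument directly for general $\Cc{1}$ monotone data.
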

\begin{proof}
	Without loss of generality, assume that $x_0=0$ and $g^{\prime}(0)>0$. Since $g\in \Ck{2}$, there exists $K,\beta>0$ such that for all $x\in[-K,K]:g^{\prime}(x)\geq\beta$. Note that $f$ is now strictly convex or concave if $K>x>0$ or $-K<x<0$ respectively, since $g$ is monotone strictly increasing on $[-K,K]$ with $g^{\prime}\geq\beta$. Let $m>0$, and consider initial data of the form
	\[
	\overline{u}_0(x)=
	\begin{cases}
		0&\text{ if }\abs{x}>K, \\
		-m&\text{ if }\abs{x}\leq K.
	\end{cases}
	\]
	We claim that the entropy solution $\overline{u}$ for this equation blows up in finite time. Firstly, note that at $x=\pm K$, the jumps are of rarefaction-type. Furthermore, from \eqref{genflux char q} we see that forward characteristics $q$ emanating from positive $x$ travel at negative speed, and vice versa, while the forward characteristic with $q(0)=0$ is simply $q(t)=0$.
	
	Since $g$ is Lipschitz, and also $g^\prime\geq\beta$, from \eqref{genflux char p} we have that
	\[
	-Lh(p(t))\leq\dot{p}(t)\leq-\beta h(p(t)).
	\] 
	In particular, $p$ cannot blow up instantaneously if $p(0)=-m$. However, by uniform convexity of $h$ we have that
	\[
	\dot{p}(t)\leq\dfrac{\alpha\beta}{2}p(t)^2,
	\]
	and hence must blow up in finite time. By the conservation of $f$ along characteristic trajectories, this can only happen when $q(t)=0$. Hence, if we can show that distinct forward characteristics do not intersect except at $x=0$, we are done. Clearly, no characteristic can meet the trivial zero characteristic except at $x=0$.
	
	Now suppose $q_1,q_2$ are two distinct characteristics with $0<q_1(0)<q_2(0)<K$. Then $p_i(0)=-m$ for the corresponding value functions along the trajectories. Suppose, if possible, that $0<q_1(\tau)=q_2(\tau)$. Now, for $i=1,2$, we know that 
	\begin{align}\label{genflux explicit}
		h(p_i(t))=\dfrac{g(q_i(0))h(p_i(0))}{g(q_i(t))}.
	\end{align}
	Since $q_1(0)<q_2(0)$, furthermore, it follows that $\dot{q}_1(\tau)>\dot{q}_2(\tau)$, so that by \eqref{genflux char q} and the monotonicity of $h^{\prime}$,
	\begin{align*}
		g(q_1(\tau))h^{\prime}(p_1(\tau))&>g(q_2(\tau))h^{\prime}(p_2(\tau)), \\
		\implies h^{\prime}(p_1(\tau))&>h^{\prime}(p_2(\tau)), \\
		\implies p_1(\tau)&>p_2(\tau).
	\end{align*}
	Hence, by \eqref{genflux explicit} and the monotonicity of $h$ for negative values, we have that
	\begin{align*}
		\dfrac{g(q_1(0))h(p_1(0))}{g(q_1(\tau))}&>\dfrac{g(q_2(0))h(p_2(0))}{g(q_2(\tau))}, \\
		\implies g(q_1(0))h(-m)&>g(q_2(0))h(-m), \\
		\implies g(q_1(0))&>g(q_2(0)),
	\end{align*}
	which contradicts the hypothesis that $g^{\prime}\geq\beta$ since $q_1(0)<q_2(0)$. Hence, forward characteristics from $x\in[0,K)$ do not intersect before they blow up. The same argument holds, mutatis mutandis, for the non-intersection of forward characteristics from $x\in(-K,0]$.
	
	To complete the proof, we need to show that characteristics emanating from the rarefaction fans at $\pm K$ do not intersect either. Let $q_1(0)=q_2(0)=K$, without loss of generality, with $-m\leq p_1(0)<p_2(0)<0$. Note that it is again clear that the characteristic with $p(0)=0$ cannot intersect any other. If $q_1(\tau)=q_2(\tau)$, then we must have that $0>p_1(\tau)>p_2(\tau)$ once again, and hence $h(p_1(\tau))<h(p_2(\tau))$, but from \eqref{genflux explicit} we have that
	\begin{align*}
		h(p_1(\tau))&=\dfrac{g(q_1(0))h(p_1(0))}{g(q_1(\tau))} \\
		&=\dfrac{g(q_2(0))h(p_2(0))}{g(q_2(\tau))}\dfrac{h(p_1(0))}{h(p_2(0))} \\
		&=h(p_2(\tau))\dfrac{h(p_1(0))}{h(p_2(0))}.
	\end{align*}
	Since it was assumed that $0>p_2(0)>p_1(0)$, it follows that $h(p_1(\tau))>h(p_2(\tau))$, which is a contradiction. Hence, characteristics of the rarefaction fan(s) are also non-intersecting. Note that the solution $u$ is supported in $[-K,K]$ for all time, and only blows up along $x=0$. If $g^{\prime}(0)<0$, then use $m$ instead of $-m$ in the construction of $\overline{u}_0$.
	
	Now, $\overline{u}_0$ as constructed is not continuous. However, from \eqref{genflux char p} we know that a bounded solution exists locally in time, so for $\eps>0$ small enough, let $u_0(x)=\overline{u}(x,\eps)$. We have already shown that it is compactly supported in space at all times, and that the characteristics do not intersect, hence $u_0\in \Cc{1}(\R)$.
\end{proof}

\subsection{A sufficient condition for non-uniqueness of entropy solutions}\label{sec nonuni}
$\L{\infty}$ blow-up goes along with the non-uniqueness, and hence this is also a sufficient condition for the existence of initial data with infinitely many Kružkov entropy solutions.
\begin{theorem}\label{generic nonuni}
	Consider the Cauchy problem \eqref{claw} for a flux with multiplicative heterogeneity of the form \eqref{general flipping flux}. If, for some zero $x_0$ of $g$ we have that $g^{\prime}(x_0)\neq0$, then there exists initial data $u_0\in\L{\infty}(\R)$ such that the Cauchy problem has infinitely many Kružkov entropy solutions.
\end{theorem}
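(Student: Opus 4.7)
The strategy is to adapt, for the general multiplicative flux, the explicit construction of Section~\ref{nonuni}. After translation and possibly replacing $g$ by $-g$, I assume $x_0 = 0$ and $g'(0) > 0$; then there exists $K > 0$ such that $g \in \Ck{2}([-K, K])$ with $g < 0$ on $(-K, 0)$ and $g > 0$ on $(0, K)$. The outline is: first, build a singular stationary entropy profile $u_s$ with blow-up at $x = 0$; second, exhibit a bounded initial datum whose entropy evolution attains this profile at a finite time $T^*$; third, produce an infinite family of Kružkov entropic continuations past $T^*$.

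For the singular stationary solution, I fix $c > 0$ and define, on $(-K, K) \setminus \{0\}$,
\[
u_s(x) = \begin{cases}
h_+^{-1}(c/g(x)) & \text{if } 0 < x < K, \\
h_-^{-1}(-c/g(x)) & \text{if } -K < x < 0,
\end{cases}
\]
where $h_+^{-1}$ and $h_-^{-1}$ are the positive and negative branches of $h^{-1}$ on $(0, \infty)$, well-defined by \eqref{S} and \eqref{SC}. Then $f(x, u_s(x)) = c\,\sgn(x)$ is bounded, while $|u_s(x)| \to \infty$ as $x \to 0$. Local integrability of $u_s$ follows from \eqref{V} and \eqref{CG}: $|u_s(x)| \lesssim |x|^{-\eta/(\eta+\eps)}$, an exponent strictly less than one. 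A verbatim adaptation of the proof of Lemma~\ref{stat sol} then shows the entropy equality at the interface: integrating by parts on $(-\eps, \eps) \times (0, \infty)$, the only surviving terms in the limit are the boundary traces at $x = 0^\pm$, and these cancel because, for $|k|$ not too large, $\sgn(u_s(0-) - k)\,f(0-, u_s(0-)) = (-1)(-c) = c = (+1)(c) = \sgn(u_s(0+) - k)\,f(0+, u_s(0+))$.

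For the bounded initial datum, fix $x_L \in (-K, 0)$ and let $u_0$ equal $u_s$ on $(-K, x_L)$ and $0$ elsewhere (matching to $0$ outside $(-K, K)$, allowed by \eqref{B}, inserting a harmless entropic shock at $\pm K$ if needed); then $u_0 \in \L{\infty}(\R) \cap \Lloc{1}(\R)$. The jump at $x_L$ from $u_s(x_L) < 0$ to $0$ is entropic: since $g(x_L) < 0$ the flux is concave in $u$, and the Lax inequality reduces (after dividing by $g(x_L)$) to $h'(u_s(x_L)) \leq h(u_s(x_L))/u_s(x_L)$, which holds for $u_s(x_L) < 0$ by strict convexity of $h$ with $h(0) = h'(0) = 0$. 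The Rankine-Hugoniot shock curve $y(t)$ satisfies $\dot y = c/|u_s(y)|$ with $y(0) = x_L$, and reaches the interface at the finite time $T^* = c^{-1} \int_{x_L}^0 |u_s(y)|\,dy$, finite by local integrability. At $t = T^*$ the resulting profile is $u_s|_{(-\infty, 0)}$ glued to $0$ on $(0, \infty)$, itself a stationary entropy solution by the same argument as in step one.

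For the infinite family of continuations past $T^*$, for each $\tau \geq 0$ I define $u^{(\tau)}$ to equal the above stationary profile on $[T^*, T^* + \tau]$, and for $t > T^* + \tau$ to activate the positive branch $h_+^{-1}(c/g(x))$ on a region $0 < x < y_\tau(t)$ connected to $0$ via a Rankine-Hugoniot shock $y_\tau$ satisfying $\dot y_\tau = c/u_s(y_\tau)$ with $y_\tau(T^* + \tau) = 0$; the existence of such $y_\tau$ reduces to a Peano-type ODE with vanishing, Hölder-continuous right-hand side at $y = 0$. Each $u^{(\tau)}$ is a Kružkov entropy solution and distinct values of $\tau$ yield distinct solutions, giving the desired infinite family. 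The main technical obstacle is the interface entropy equality for $u_s$: the cancellation of the two boundary traces relies essentially on $u_s$ having opposite signs on the two sides of $x = 0$, which is possible only because $g'(0) \neq 0$ forces $g$ to change sign across the interface, allowing the two branches of $h^{-1}$ to be matched in the required way.
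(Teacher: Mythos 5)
Your proposal is correct and follows essentially the same route as the paper: a singular stationary profile with opposite-signed branches whose interface traces cancel in the entropy identity (the paper's Lemma~\ref{stat sol} generalised), a bounded truncation that reaches the half-profile in finite time via a Rankine--Hugoniot shock, and a one-parameter family of delayed activations of the positive branch (the paper's $u_\lambda$). The only notable difference is that you make the intermediate step explicit — constructing the bounded datum and computing the finite arrival time $T^*$ of the shock at the interface — where the paper instead defers to its blow-up criterion (Theorem~\ref{vander}); your version mirrors the paper's own explicit $xu^2$ computation and is, if anything, slightly more self-contained on that point.
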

\begin{proof}
	Without loss of generality, suppose $x_0=0$ and $g^{\prime}(0)>0$. We will imitate the construction for $f(x,u)=xu^2$; since the construction is localised in space, we may assume that $g^{\prime}\geq\beta>0$ for some positive constant $\beta$. Let $u_+$ denote the continuous stationary solution of the Cauchy problem of such that $u_r(x)\geq0$ with $u_r(x)=0$ for $x<0$ and $f(x,u_r(x))=1$ for $x>0$. Similarly, let $u_l(x)$ denote the continuous stationary negative solution supported on $(-\infty,0]$ with $f(x,u_l(x))=-1$ for $x<0$. First, we will show that function $u(x,t)$ is an entropy solution of \eqref{claw}, where $u(x,t)=u_l(x)+u_r(x)$, or in other words
	\[
	u(x,t)=
	\begin{cases}
		u_l(x)&\text{ if }x<0, \\
		u_r(x)&\text{ if }x>0.
	\end{cases}
	\]
	It is clear that $u$ is a classical solution of \eqref{claw} away from the interface $\{x=0\}$, and by \eqref{V}-\eqref{CG} $u$ and $f\circ u$ are also locally integrable. Hence, for any positive $\varphi\in\Cc{1}(\R\times[0,\infty))$ and $k\in\R$, the same computation as was done in Lemma~\ref{stat sol} shows that $u$ as defined is an entropy solution of \eqref{claw}. For completeness, we replicate the computation here.
	\begin{align*}
		E:=& \iint_{\R^2_+}\abs{u(x)-k}\varphi_t+\sgn(u(x)-k)\left\{\left[f(x,u(x))-f(x,k)\right]\varphi_x-f_x(x,k)\varphi\right\}dxdt \\
		=&\int_0^{\infty}\int_{-\eps}^{\eps}\abs{u(x)-k}\varphi_t+\sgn(u-k)\left\{\left[f(x,u)-f(x,k)\right]\varphi_x-f_x(x,k)\varphi\right\}dxdt \\
		&-\int_{0}^{\infty}\left[f(\eps,u)-f(\eps,k)\right]\varphi(\eps,t)dt-\int_{0}^{\infty}\left[f(-\eps,u)-f(-\eps,k)\right]\varphi(-\eps,t)dt.
	\end{align*}
	As $\eps\to0$, by the construction of $u$, we obtain:
	\[
	E=\int_{0}^{\infty}(-f(1,u(1))-f(-1,u(-1)))\varphi(0,t)dt=0.
	\]
	Now, consider the initial data $u_0(x)=u_l(x)$. By the blow-up criterion of Theorem~\ref{vander}, we can obtain this profile at a positive time from $\L{\infty}$ initial data. Hence, it is sufficient to demonstrate that the Cauchy problem with initial data $u_l(x)$ has infinitely many entropy solutions. Note that $u_l$ itself is already a Kružkov entropy solution. Thus, if $\gamma(t)$ denotes the Rankine-Hugoniot curve connecting the stationary solution $u_r$ with zero to the right with $\gamma(t)>0$ for $t>0$ and $\gamma(t)=0$ for $t\leq0$, we can define an infinite family of entropy solutions to the Cauchy problem $u_{\lambda}(x,t)$  parametrised by $\lambda\in[0,\infty)$ as follows:
	\[
	u_{\lambda}(x,t)=
	\begin{cases}
		u_l(x)&\text{ if }x<0, \\
		u_r(x)&\text{ if }0\leq x<\gamma(t-\lambda), \\
		0&\text{ if }\gamma(t-\lambda)\leq x.
	\end{cases}
	\]
\end{proof}
Hence, the sufficient condition for blow-up as established in Theorem~\ref{vander} is also a sufficient condition for the non-uniqueness of entropy solutions. Kružkov's `doubling of variables' argument \cite[Theorem 1]{kruzkov} works only in the $\L{\infty}$ setting.

\section{Front tracking approximations}\label{ft approx}
Despite the pathologies uncovered in the preceding sections, we can select a unique entropy solution of \eqref{claw} as the limit of front-tracking approximations, suitably defined. Recall that $G=\{x:g(x)=0\}$ is a closed set, by the continuity of $g$ and therefore the complement of $G$ is a countable union of disjoint open intervals, say $G^c=\cup(a_i,b_i)$ where the possibility that $a_i=-\infty,b_i=+\infty$ are not excluded. By the assumption \eqref{B}, we also know that these boundary points cannot accumulate.

$\L{\infty}$ bounds fail to hold without coercivity, so numerical schemes such as \cite{sylla2024convergencefinitevolumescheme} break down (see section~\ref{blowup}). Hence, we employ a novel front tracking scheme to demonstrate the well-posedness of \eqref{claw}. Consider a flux of the form \eqref{mult_form} such that $g$ is supported on an interval $(a,b)$, possibly with $a=-\infty$ or $b=+\infty$. Without loss of generality we assume that $g>0$ here, the case $g<0$ can be analysed in similar fashion.

Since the flux is uniformly zero outside the domain $(a,b)$, the solution is stationary regardless of initial datum, and thus we can restrict ourselves to the case of $u_0$ supported in $(a,b)$ without any loss of generality. We can demonstrate the existence of solutions by means of a heterogeneous front-tracking scheme as developed in \cite{hetft}, since $f$ is strictly convex on the open interval and uniformly convex on compact sets within it. The steps are detailed as follows:
\begin{enumerate}
	\item Approximate ${\cF(x,u)}=\sgn(u)f(x,u)$ by $\delta$-approximate piecewise constant functions ${\cF^{\delta}}$, which converge to $\cF$ in $\Lloc{1}(\R)$ as $\delta\to0$.
	\item For each $\delta$, solve the Riemann problems at the initial time either as Rankine-Hugoniot shocks or $\delta$-fans \cite[Definition 1]{hetft}.
	\item Whenever interactions occur, the forward solution is a Rankine-Hugoniot shock in all cases \cite[Theorem 2.4]{hetft}.
\end{enumerate}
The only additional difficulty in this case is handling the interfaces where $g$ vanishes. Since we will work locally for the front tracking, it is enough to consider the case where $b=+\infty$ and, without loss of generality, $a=0$. Thus in this case $g$ is a positive function on the positive real numbers. Set 
\begin{equation}\label{interface riem}
	u_0(x)=
	\begin{cases}
		0&\text{ if }x<0, \\
		u_r(x)&\text{ if }x\geq0,
	\end{cases}
\end{equation}
where $u_r$ is a differentiable function such that $x \mapsto f(x, u_r(x))$ is constant, say equal to $f_r$. For any positive number $f_r\in\R$ we can find exactly two such functions satisfying our requirement, one positive and the other negative, by the implicit function theorem. If $f_r=0$, then $u_r\equiv0$ is the only such function. These are precisely the stationary solutions corresponding to the flux \cite[Lemma 2.1]{hetft}. We have the following important lemma which assures us that $u_0$ is locally integrable.
\begin{lemma}\label{gen riem}
	When the flux satisfies \eqref{V} and \eqref{CG}, generalised Riemann data of the form \eqref{interface riem} are integrable near the interface. More generally, if $x \mapsto f(x, u_0(x))$ is bounded, then $u_0$ is locally integrable.
\end{lemma}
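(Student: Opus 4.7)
The plan is to exploit the quantitative interplay between the vanishing rate of $g$ from \eqref{V} and the growth of $h$ from \eqref{CG}: the former forces $h(u_0)$ to blow up at a prescribed rate near $\partial G$, while the latter turns this into a mild blow-up of $|u_0|$ itself that is locally integrable. On compact subsets of $G^c$ bounded away from $\partial G$, local integrability reduces to a uniform bound on $|u_0|$. The assumption \eqref{B} ensures these two regimes cover any compact set.

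\textbf{Step 1 (compact subsets of $G^c$ bounded away from $\partial G$).} If $K \subset G^c$ is compact with $K \cap \partial G = \emptyset$, then by continuity of $g$ there exists $\mu > 0$ with $|g(x)| \geq \mu$ on $K$. Hence $h(u_0(x)) \leq \mu^{-1}\supnorm{f\circ u_0}$ on $K$. Since $h$ is strictly convex with $h(0)=0$ and $h(u) \to \infty$ as $|u| \to \infty$ (which follows from \eqref{CG}), the sublevel sets $\{h \leq R\}$ are bounded. Therefore $u_0$ is bounded, hence integrable, on $K$.

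\textbf{Step 2 (neighbourhood of $\bar{x} \in \partial G$).} Fix $\bar{x} \in \partial G$ and set $M = \supnorm{f\circ u_0}$. By \eqref{V}, choose $\delta, K > 0$ such that $|g(x)| \geq K|x-\bar{x}|^\eta$ for all $x \in (\bar{x}-\delta, \bar{x}+\delta) \cap G^c$. At any such $x$, the relation $g(x)h(u_0(x)) = f(x,u_0(x))$ and the bound $|f \circ u_0| \leq M$ give
\[
h(u_0(x)) \leq \frac{M}{K\,|x - \bar{x}|^\eta}.
\]
Now invoke \eqref{CG}: where $|u_0(x)| > M_0$, we have $C|u_0(x)|^{\eta+\varepsilon} \leq h(u_0(x))$, so that
\[
|u_0(x)| \leq \left(\frac{M}{C K}\right)^{1/(\eta+\varepsilon)} |x-\bar{x}|^{-\eta/(\eta+\varepsilon)},
\]
while elsewhere the trivial bound $|u_0(x)| \leq M_0$ applies. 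Since $\eta/(\eta+\varepsilon) < 1$, the function on the right is integrable on $(\bar{x}-\delta, \bar{x}+\delta)$. Combined with the constant bound, this yields local integrability of $u_0$ near $\bar{x}$. By \eqref{B}, any compact interval meets $\partial G$ in only finitely many points, so Steps 1 and 2 together give local integrability on the whole of $G^c$ up to its boundary.

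\textbf{Step 3 (the specific Riemann datum).} For $u_0$ as in \eqref{interface riem}, the restriction to $(-\infty, 0)$ is identically zero and hence trivially integrable, while the restriction to $[0, \infty)$ satisfies $f(x, u_r(x)) = f_r$ constant, so the hypothesis $x \mapsto f(x, u_0(x))$ bounded holds with $\bar{x} = 0 \in \partial G$. Steps 1--2 then give integrability across the interface.

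The main obstacle is conceptual rather than technical: one must recognise that \eqref{CG} is precisely calibrated against \eqref{V} so that the induced singularity of $u_0$ at an interface point is of order strictly less than one, hence integrable. Without the gap $\varepsilon > 0$ in \eqref{CG}, the pointwise bound would degenerate to $|x - \bar{x}|^{-1}$, which is not locally integrable, and the lemma would fail.
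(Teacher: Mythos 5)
Your proof is correct and follows essentially the same route as the paper: you use \eqref{V} to bound $h(u_0(x))\leq M/(K\abs{x-\overline{x}}^{\eta})$ near an interface point and then invert via \eqref{CG} to obtain $\abs{u_0(x)}\lesssim\abs{x-\overline{x}}^{-\eta/(\eta+\varepsilon)}$, an integrable singularity since the exponent is strictly less than one, which is exactly the paper's bound $u_r(x)\leq\tilde{C}x^{-(1-\theta)}$ with $\theta\in(0,1)$. Your treatment is somewhat more complete than the paper's (explicitly separating the region $\abs{u_0}\leq M_0$, the compact sets away from $\partial G$, and invoking \eqref{B} for the finite covering), and your closing observation on the sharpness of the gap $\varepsilon>0$ matches the paper's later example $f(x,u)=x^{2}u^{2}$ in section~\ref{non int}.
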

\begin{proof}
	Let $f_r>0$ and consider the negative stationary solution $u_r$ associated with $f_r$. Then, since $f(x,u_r(x))=f_r$ for all positive $x$, we have that
	\[
	h(u_r(x))=\left(\dfrac{f_r}{g(x)}\right).
	\]
	Now, if $p$ is the order of vanishing for $g$ in \eqref{V}, then near zero $g$ behaves like $x \mapsto x^{p}$, while by \eqref{CG}, we have that $\abs{y}\leq h((C^{-1}y)^{1/(p+\eps)})$ for $\abs{y}$ large enough. Hence, for some constants $\tilde{C}>0$ and $\theta\in(0,1)$, we have that
	\[
	u_r(x)\leq\dfrac{\tilde{C}}{x^{1-\theta}},
	\]
	and thus the singularity at $x=0$ is integrable. Since $u_0$ is uniformly bounded outside a neighbourhood of $0$ if $f$ is, the local integrability of more general functions supported on the positive real numbers trivially follows.
\end{proof}
In section~\ref{non int} we demonstrate that the growth exponent condition on $h$ is sharp with respect to the behaviour of $g$ near its zeroes.

We claim that if $u_0<0$, then the stationary solution $u(x,t)=u_0(x)$ is an entropy solution of the Cauchy problem \eqref{claw} associated with initial datum \eqref{interface riem}. It is trivial to see that, away from the interface, $u$ is a classical solution of \eqref{claw}. Let $k\in\R$ be given. Now, $u_0$ blows up near $x=0$; in particular, $u_0(x)\to-\infty$ as $x\to0$. Hence, there exists $N>0$ such that $u_0(x)<k$ for all $x\in(0,N)$. Taking $\eps\in(0,N)$ arbitrarily small, and noting that $u(x,t)$ is a classical solution of \eqref{claw} for $x>0$, we have that for any positive test function $\varphi$,
\begin{align*}
	L=&\iint_{\R^2_+}\abs{u-k}\varphi_t+\sgn(u-k)\left[f(x,u)-f(x,k)\right]\varphi_xdxdt \\
	=&\int_{0}^{\eps}\int_{0}^{\infty}\abs{u-k}\varphi_t+\sgn(u-k)\left[f_r-f(x,k)\right]\varphi_xdxdt \\
	&+\int_{\eps}^{\infty}\int_{0}^{\infty}\abs{u-k}\varphi_t+\sgn(u-k)\left[f(x,u)-f(x,k)\right]\varphi_xdxdt.
\end{align*}
Since $u(x,t)=u_0(x)$ is a classical solution of \eqref{claw} for $x\geq\eps$, we can apply integration by parts on the second integral. Now, $u_0(\eps)<k$, hence $\sgn(u-k)<0$ on this boundary, and thus
\begin{align*}
	L=&\int_{0}^{\eps}\int_{0}^{\infty}\abs{u-k}\varphi_t+\sgn(u-k)\left[f_r-f(x,k)\right]\varphi_xdxdt \\
	&+\int_{0}^{\infty}[f_r-f(\eps,k)]\varphi(\eps,t)dt \\
	&+\int_{\eps}^{\infty}\int_{0}^{\infty}\sgn(u-k)f_x(x,k)\varphi dxdt-\int_{\eps}^{\infty}\abs{u_0(x)-k}\varphi(x,0)dx.
\end{align*}
Now, as $\eps\to0$, the first integral vanishes by the local integrability of the integrand, and the second integral approaches
\[
f_r\int_{0}^{\infty}\varphi(0,t)dt,
\]
since $f(\eps,k)\to0$ as $\eps\to0$ for any fixed $k\in\R$. Since $f_r>0$, the test function $\varphi$ and real number $k$ were arbitrarily chosen and the integrand at the initial time is non-negative even for $x<0$, we can conclude that \eqref{entweak} holds; the stationary solution $u(x,t)=u_0(x)$ is an entropy solution of \eqref{claw} with initial datum \eqref{interface riem} when $u_0<0$, and the same computation shows that this is no longer true when $u_0>0$.

In order to avoid the trouble with positive stationary solutions, we ensure that our initial discretisation {$\cF^{\delta}$} is always $0$ in a small neighbourhood of the origin, which vanishes as the discretisation parameter $\delta\to0$. Then, fronts only interact with the interface to produce entropic interfaces. However, note that solutions satisfying the entropy condition \eqref{entweak} need not satisfy the weak formulation of \eqref{claw}; if they happen to be uniformly bounded in addition to satisfying the entropy condition, then we can say that they are weak solutions by considering $k=\pm\supnorm{u}$, but not otherwise.

\subsection{Flux with discrete zero set}
Since solutions can be analysed in each maximal open interval, it is sufficient to generalise the previous case where $G$ is a singleton, say $G=\{0\}$, by working within each maximal open domain where $g$ is non-negative. If $f$ satisfies the structural constraint with respect to all the zeroes in $G$, then the Cauchy problem \eqref{claw} has locally integrable solutions for flux-bounded initial data. More precisely, we have the following result:
\begin{theorem}
	Let $f$ be given by \eqref{mult_form} satisfy our assumptions and structural constraint. Fix $u_0\in \Lloc{1}(\R)$ such that the function $\theta(x)=\sgn(u_0(x))f(x,u_0(x))$ is locally of bounded variation. Then, the Cauchy problem \eqref{claw} has an entropy solution in the sense of \eqref{entweak} globally in time.
\end{theorem}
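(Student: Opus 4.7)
The strategy is to construct the entropy solution by an adapted heterogeneous front tracking scheme, following the blueprint laid out in the preceding pages and in \cite{hetft}. Assumption \eqref{B} guarantees that $G^c$ is the countable disjoint union of maximal open intervals $(a_i, b_i)$, on each of which $g$ has a fixed sign. Since $f$ vanishes on $G$, any entropy solution must be stationary there, so it suffices to construct the solution on each $(a_i, b_i) \times [0, \infty)$ and glue by setting $u(x, t) = u_0(x)$ for $x \in G$. Fix one such interval and, without loss of generality, assume $g > 0$ on it.

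First I would discretize the initial flux. Since $\theta = \sgn(u_0)\, f(\cdot, u_0)$ is locally BV, I approximate it in $\Lloc{1}$ by a piecewise constant sequence $\cF^\delta$ whose range lies on a grid of mesh $\delta$, imposing the additional constraint that $\cF^\delta \equiv 0$ on a $\delta$-neighbourhood of each interface endpoint, as described after Lemma \ref{gen riem}, to rule out the inadmissible positive stationary branch at the interface. Using the implicit function theorem together with \eqref{S} and \eqref{SC}, I recover an approximate datum $u_0^\delta$ from $\cF^\delta$ by choosing, on each cell, the branch prescribed by $\sgn(\cF^\delta)$. Each initial Riemann problem is then resolved either by a Rankine-Hugoniot shock or a $\delta$-fan per \cite[Definition 1]{hetft}, and fronts are propagated forward with all interactions handled as entropic shocks via \cite[Theorem 2.4]{hetft}. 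Since $\sgn(u^\delta) f(\cdot, u^\delta)$ is conserved along characteristics and drops across shocks, the uniform bound $\supnorm{f \circ u^\delta} \leq \supnorm{\theta}$ holds on any compact time interval.

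Second, I would establish compactness and pass to the limit. Assumption \eqref{LL} upgrades the flux bound to a uniform bound on $f_u \circ u^\delta$ on compact subsets of $(a_i, b_i) \times [0, T]$, so front speeds are uniformly bounded there; combined with the finiteness of initial fronts on compact sets and the non-increase of the front count under interaction in \cite{hetft}, one obtains uniform BV bounds on $\sgn(u^\delta) f(\cdot, u^\delta)(\cdot, t)$ on compact spatial sets. Helly's theorem then extracts a subsequential limit $\cF^\delta \to \cF$ in $\Lloc{1}$, and the structural assumptions \eqref{V} and \eqref{CG}, together with Lemma \ref{gen riem}, promote this to $u^\delta \to u$ in $\Lloc{1}$ with $u$ locally integrable. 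The entropy inequality \eqref{entweak} passes to the limit in the interior of each $(a_i, b_i) \times [0, \infty)$ by the standard front-tracking argument, using $\Lloc{1}$ convergence together with the uniform flux bound.

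The main obstacle is the interface behaviour at the endpoints $a_i, b_i$. The computation following \eqref{interface riem} shows that across an interface with $g > 0$ inside and $g = 0$ outside, the negative stationary branch produces a vanishing boundary contribution in \eqref{entweak} as the shrinking strip collapses, while the positive branch does not. Enforcing $\cF^\delta = 0$ near the interface ensures that no front carried by the positive branch ever reaches the interface in the approximation, a property the limit inherits. A careful accounting of the integration by parts in \eqref{entweak} on a thin strip adjacent to the interface, combined with the uniform flux bound and the local integrability supplied by Lemma \ref{gen riem}, shows that the boundary terms vanish in the limit and yields the entropy inequality across the interface. Gluing the pieces across $G^c$ with the stationary extension on $G$ completes the proof.
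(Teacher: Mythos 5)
Your proposal follows essentially the same route as the paper, which in fact leaves this theorem without a formal proof and relies on the preceding discussion: the heterogeneous front tracking scheme of \cite{hetft} applied interval-by-interval on $G^c$, the piecewise-constant approximation of $\sgn(u_0)f(\cdot,u_0)$ forced to vanish near interface points, the integrability and entropy computations of Lemma~\ref{gen riem} and the stationary-solution calculation after \eqref{interface riem}, and gluing across $G$. Your write-up actually supplies more of the compactness detail (the BV bound on the flux variable and Helly's theorem) than the paper does, and is consistent with its intended argument.
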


\subsection{Uniqueness and stability}

We adopt Kružkov's `doubling of variables' technique in order to prove the uniqueness and stability of solutions. The main difficulty arises in dealing with merely integrable rather than uniformly bounded functions. We say that $u$ satisfies the interface condition on $G\times(0,T)$ if for each maximal interval $(a,b)\subseteq G^c$, we have one of the following: 
\begin{enumerate}
	\item If $g>0$ in $(a,b)$ then $u(a+,t)<+\infty,u(b-,t)>-\infty$ for $t\in(0,T)$.
	\item If $g<0$ in $(a,b)$ then $u(a+,t)>-\infty,u(b-,t)<+\infty$ for $t\in(0,T)$.
\end{enumerate}
Our front tracking algorithm seems to indicate that the domain of dependence for solutions is localised within each maximal interval where $g$ is non-zero. Hence, limits of front tracking approximations all satisfy this interface condition. Furthermore, non-uniqueness involves characteristics emanating from the degenerate interface at positive times. Thus, we expect that such a Lax-type condition on the solution should allow us to recover uniqueness. The following theorem confirms this intuition.

\begin{theorem}[Uniqueness and stability]\label{unistabi}
	Let $u,v$ be entropy solutions of \eqref{claw} in a domain containing $\Omega_T$ satisfying the interface condition on $G\times(0,T)$ with respective initial values $u_0,v_0$. Then, for any $a,b\in\R$ such that $a<b$ with $g(a)=g(b)=0$ and $g(x)\neq0$ for $x\in(a,b)$, we have that for all $T>0$:
	\begin{equation}\label{stability}
		\int_{a}^{b}\abs{u(x,T)-v(x,T)}dx\leq\int_{a}^{b}\abs{u_0(x)-v_0(x)}dx.
	\end{equation}
\end{theorem}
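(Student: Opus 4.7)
The plan is to execute Kružkov's doubling of variables directly on the fixed strip $(a,b) \times (0,T)$. Testing the entropy inequality \eqref{entweak} for $u$ at parameter $k = v(y,s)$, and symmetrically for $v$ at $k = u(x,t)$, against a symmetric mollifier in $(x-y, t-s)$ and collapsing to the diagonal yields the distributional Kato-type inequality
\[
\partial_t |u - v| + \partial_x \bigl[g(x) \Phi\bigr] \leq g'(x) \Phi \quad \text{on } (a,b) \times (0,T),
\]
where $\Phi := \sgn(u - v)(h(u) - h(v))$. The multiplicative structure of the flux is decisive here: the only composite that enters the mollification step as a ``flux'' is $\sgn(u-v)(f(x,u) - f(x,v)) = g(x) \Phi$, which is uniformly bounded by $\supnorm{f \circ u} + \supnorm{f \circ v}$. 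Hence the diagonal argument survives the lack of $\L{\infty}$ bounds on $u$ and $v$ themselves.

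To extract the localized $\L{1}$ estimate, I would test the above inequality against $\varphi(x,t) = \chi_\delta(x) \theta_\eps(t)$, where $\chi_\delta$ is a continuous piecewise linear cutoff vanishing outside $[a,b]$ and equal to $1$ on $[a+\delta, b-\delta]$, and $\theta_\eps$ smoothly approximates $\mathbf{1}_{[0,T]}$. After integration by parts the two right-hand contributions group into the single pairing $\iint \partial_x(g \chi_\delta) \cdot \Phi \, dx\,dt$. Since $g(a) = g(b) = 0$, this distributional derivative converges to $g'(x) \mathbf{1}_{[a,b]}$ in $\mathcal{D}'$ as $\delta \to 0$ without producing Dirac masses at the endpoints. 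The $\L{1}_{loc}$-continuity in time of entropy solutions then permits passage to the limit $\eps \to 0$, recovering the time-boundary terms $\int_a^b |u - v|(x,T)\,dx$ and $\int_a^b |u_0 - v_0|\,dx$.

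The hard part will be the $\delta \to 0$ limit of the spatial pairing. By \eqref{V} and \eqref{CG}, the stationary profiles permitted at the interface allow $|\Phi|$ to grow like $|g(x)|^{-1}$, so the integrals $\iint g' \Phi \chi_\delta$ and $\iint g \Phi \chi_\delta'$ need not admit separate finite limits; they must be kept grouped as the pairing of $\Phi$ with the bounded distribution $\partial_x(g \chi_\delta)$. Here is where the interface condition enters decisively: it rules out exactly the pathological positive (resp.\ negative) singular trace at the left (resp.\ right) endpoint of a $g>0$ component that powered the non-uniqueness examples of Section~\ref{nonuni}, and the symmetric statement applies to $g<0$ components. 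Combined with the trace property of Lemma~\ref{trace}, this pins the boundary values of $g\Phi$ at $a^+$ and $b^-$ to configurations in which the limiting combined source-plus-boundary contribution is non-positive. Absorbing it into the other side of the inequality yields \eqref{stability}; setting $u_0 = v_0$ recovers uniqueness among entropy solutions satisfying the interface condition as an immediate corollary.
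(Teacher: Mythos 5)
Your overall strategy coincides with the paper's: a Kato inequality from doubling of variables on the strip, a space--time cutoff, and the interface condition to sign the surviving boundary contribution. The decisive observation that $\sgn(u-v)(f(x,u)-f(x,v))=g\Phi$ is controlled by $\supnorm{f\circ u}+\supnorm{f\circ v}$ even though $u,v\notin\L{\infty}$ is exactly the right one. However, there is a genuine gap in the middle of your argument. The Kato inequality you write down carries a spurious source term: for two classical solutions one has $\partial_t\abs{u-v}+\partial_x\bigl[g\Phi\bigr]=0$ exactly, and in the doubling argument the $f_x(x,k)$ source terms of \eqref{entweak} cancel against the commutator produced by collapsing the mollifier, so the correct inequality is $\partial_t\abs{u-v}+\partial_x\bigl[g\Phi\bigr]\leq 0$ with \emph{no} right-hand side (this is what the paper uses as \eqref{double variable}; your version, equivalent to $\partial_t\abs{u-v}+g\,\partial_x\Phi\leq0$, is false already for smooth solutions since $g'\Phi$ has no sign).

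This is not a cosmetic issue, because it dictates which term you must control as $\delta\to0$. Your grouping forces you to pass to the limit in $\iint\partial_x(g\chi_\delta)\,\Phi$, whose formal limit is $\iint_{(a,b)}g'\Phi$ over the \emph{whole} interval. As you yourself note, $\abs{\Phi}$ may grow like $\abs{g}^{-1}$ near the endpoints, so $g'\Phi\sim g'/g$ is not integrable there; keeping the terms ``grouped'' does not repair this, since the grouped quantity converges to the same divergent object. Moreover, even where finite, $g'\Phi$ has no sign in the interior of $(a,b)$, and the interface condition is a statement about the one-sided traces at $a$ and $b$ only --- it gives no information in the interior, so it cannot make this bulk term non-positive. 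With the correct (source-free) Kato inequality the only spatial contribution is the boundary-layer term $\iint g\Phi\,\chi_\delta'$, which concentrates at $a$ and $b$, is uniformly bounded by the flux bounds, and can be signed there: writing $g\Phi=\abs{u-v}\,g(x)h'(\xi)$ by the mean value theorem, the interface condition excludes precisely the singular trace ($u\to+\infty$ at $a^+$, resp.\ $u\to-\infty$ at $b^-$, for a $g>0$ component) that would make this concentrated term positive. That is the paper's route, and your proof should be rerouted through it. A smaller point: your cutoff $\chi_\delta$ is supported up to the endpoints $a,b$, where $u,v$ need not be locally bounded; the Kato inequality obtained from Kružkov's argument is only justified for test functions compactly supported in the open interval, so the transition layers should sit strictly inside $(a,b)$, as in the paper's choice of $\psi_\eps$.
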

\begin{proof}
	Our method of proof is to use the following inequality for positive test functions $\varphi$ supported in the interior of $\Gamma_T:=[a,b]\times[0,T]$.
	\begin{equation}\label{double variable}
		-\iint_{\Gamma_T}\abs{u-v}\varphi_t+\sgn(u-v)\left[f(x,u)-f(x,v)\right]\varphi_xdxdt\leq0.
	\end{equation}
	Let $S$ denote the compact support of $\varphi$. Since $f\circ u,f\circ v$ are bounded, $u,v\in \L{\infty}(S)$ and thus \eqref{double variable} follows from the arguments of Kružkov \cite[Theorem 1]{kruzkov}, which only requires regularity of the flux and boundedness of the entropy solutions. Now we choose a special family of test functions $\varphi_{\eps}$ as follows: let $\eta_{\eps}$ denote the standard, compactly supported approximations of identity, and let $\delta_{\eps}$ denote the function
	\[
	\delta_{\eps}(x)=\int_{-\infty}^{x}\eta_{\eps}(z)dz,
	\]
	so that it is strictly increasing, $\delta_{\eps}(y)=0$ for $y\leq-\eps$, and $\delta_{\eps}(y)=1$ for $y\geq\eps$. Then, for $0<s<r<T$ and $\eps>0$ small enough, define
	\[
	\varphi_{\eps}(x,t)=\left[\delta_{\eps}(t-s)-\delta_{\eps}(t-r)\right]\left[\delta_{\eps}(x-(a+2\eps))-\delta_{\eps}(x-(b-2\eps))\right],
	\]
	i.e. $\varphi_{\eps}\in \Cc{\infty}$ approximates the characteristic function of the rectangular domain $(a,b)\times(s,r)$. We can write this as $\varphi_{\eps}(x,t)=\tau_{\eps}(t)\psi_{\eps}(x)$, and note that $\tau_{\eps},\psi_{\eps}$ approximate the characteristic functions of the intervals $[s,r]$ and $[a,b]$ respectively. Now, for each $\eps>0$, the entropy solutions $u,v$ are uniformly bounded on the support of $\varphi_{\eps}$. Since the integrand in \eqref{double variable} consists of uniformly bounded terms for any fixed $\eps>0$ small enough, we can rewrite the inequality as follows:
	\begin{equation}\label{apprx double}
		\begin{split}
			&\int_{r-\eps}^{r+\eps}\eta_{\eps}(t)\int_{a}^{b}\abs{u-v}\psi_{\eps}(x)dxdt-\int_{s-\eps}^{s+\eps}\int_{a}^{b}\abs{u-v}\psi_{\eps}(x)dxdt \\
			\leq&\int_{a+\eps}^{a+3\eps}\eta_{\eps}(x)\int_{s-\eps}^{r+\eps}\sgn(u-v)[f(x,u)-f(x,v)]\tau_{\eps}(t)dtdx \\
			&-\int_{b-3\eps}^{b-\eps}\eta_{\eps}(x)\int_{s-\eps}^{r+\eps}\sgn(u-v)[f(x,u)-f(x,v)]\tau_{\eps}(t)dtdx.
		\end{split}
	\end{equation}
	Now, if $u\neq v$, then
	\[
	\sgn(u-v)[f(x,u)-f(x,v)]=\abs{u-v}\dfrac{f(x,u)-f(x,v)}{u-v},
	\]
	so by the interface condition on $u,v$ and the mean value theorem, the right-hand side of \eqref{apprx double} must converge to a non-positive quantity as $\eps\to0$, since all the other terms in the integrand(s) are positive. Taking $\eps\to0$ on the left-hand side as well and letting $s\to0,r\to T$, we obtain \eqref{stability} as required, which completes the proof.
\end{proof}
Hence, although entropy solutions of \eqref{claw} may not be unique, the limits of front-tracking approximations are. We remark that Theorem~\ref{unistabi} also allows us to extend the well-posedness theory to any initial data $u_0$ such that $f\circ u_0$ is merely uniformly bounded.

\section{Explicit examples}
We solve some more initial value problems explicitly, and demonstrate that our sufficient conditions for $\L{\infty}$ blow-up is sharp.

\subsection{Heterogeneity with vanishing derivative}\label{non int}
To see that the derivative condition in Theorem~\ref{vander} is sharp, consider the flux
\begin{align}\label{double convex}
	f(x,u)=x^2u^2.
\end{align}
The characteristic ODEs for \eqref{claw} reduce in this case to
\begin{align}
	\dot{q}(t)&=2q(t)^2p(t),\label{doubly convex char q} \\
	\dot{p}(t)&=-2q(t)p(t)^2.\label{doubly convex char p}
\end{align}
If $q(0)=0$ or $p(0)=0$, then $q(t)=q(0)$ and $p(t)=p(0)$. This equation \eqref{doubly convex char p} is no longer autonomous; the system is now genuinely coupled. However, since $f$ is invariant along trajectories, we know that $q(t)p(t)=q(0)p(0)$. Hence, the equations reduce to
\begin{align}
	\dot{q}(t)&=2q(0)p(0)q(t),\label{explicit doubly convex char q} \\
	\dot{p}(t)&=-2q(0)p(0)p(t).\label{explicit doubly convex char p}
\end{align}
If we denote $C=2q(0)p(0)$, then the system of equations \eqref{explicit doubly convex char q},\eqref{explicit doubly convex char p} have explicit solution
\begin{align}
	q(t)&=q(0)\exp(Ct),\label{doubcon sol q} \\
	p(t)&=p(0)\exp(-Ct),\label{doubcon sol p}
\end{align}
and thus neither the characteristic trajectory nor the value can blow up in finite time. Note that $f$ as defined in \eqref{double convex} does \textit{not} satisfy our structural assumption, however, and thus stationary solutions are not integrable. This is easily seen as follows. Let $f=1$, and consider the positive stationary solution $u$ associated with it. Then (to rephrase) 
\[
u(x,t)=\sqrt{\dfrac{1}{x^2}}=\dfrac{1}{\abs{x}},
\]
which is clearly not integrable. Hence, this also shows that the exponent in our structural constraint is sharp. If $u_0\in \L{\infty}$, however, we can avoid this issue since we have explicit control on the $\L{\infty}$ norm. At the front tracking level, we choose our $\delta$-approximate piecewise stationary solution to be zero in a small neighbourhood of the origin, ensuring that we remain in the $\L{\infty}$ and hence $\Lloc{1}$ class.

\subsection{Ill-posedness}\label{illp}
If we relax the constraint that our flux be of multiplicative form, the blow-up can be even more pathological. Consider the following spatially heterogeneous flux for \eqref{claw}
\begin{equation}\label{coercive flux}
	f(x,u)=xu^2+u^4.
\end{equation}
The equations \eqref{char ode} then reduce to
\begin{align}
	\dot{q}(t)&=2q(t)p(t)+4p(t)^3, \label{char ode coercive flux q} \\
	\dot{p}(t)&=-p(t)^2. \label{char ode coercive flux p}
\end{align}
Note that \eqref{char ode coercive flux p} is still an autonomous Riccati equation that may blow up in finite time and is explicitly given by $p(t)\equiv0$ if $p(0)=0$, and
\[
p(t)=\dfrac{1}{t+p(0)^{-1}}
\]
if $p(0)\neq0$. Consider the constant initial datum $u_0 \equiv-1$. By the method of characteristics, a smooth solution exists locally in time. Given the explicit form of $p(t)$, we can compute $q(t)$ for any value of $q(0)$ by conservation of $f$ along trajectories \eqref{char ode coercive flux q}-\eqref{char ode coercive flux p}. In particular, we have that
\[
q(t)p(t)^2+p(t)^4=q(0)p(0)^2+p(0)^4,
\]
and hence, for $p(0)\neq0$,
\begin{equation}\label{q formula 2}
	\begin{split}
		q(t)&=p(t)^{-2}\left(q(0)p(0)^2+p(0)^4-p(t)^4\right) \\
		&=\dfrac{q(0)+1}{p(t)^2}-p(t)^2.
	\end{split}
\end{equation}
Since $p(0)=-1$ for all characteristics, we can write \eqref{q formula 2} as
\[
q(t)=\left(1-t\right)^2\left(q(0)+1\right)-\dfrac{1}{\left(1-t\right)^2},
\]
and thus $q(t)$ blows up as $t\to1$ as well. Now, it is clear that for $t<1$, no two characteristics can intersect, and that the value of $u(x,t)$ for any fixed time level is constant in $x$. Hence, the Cauchy problem corresponding to this flux is ill-posed globally in time -- there is no way to continue the solution beyond $t=1$, even if we look for merely locally integrable functions.

As with the construction of non-unique solutions, this ill-posedness result can be generalised to arbitrary spatial dimensions by a simple extension. In particular, the following Cauchy problem is globally ill-posed, for any arbitrary choice of functions $f_i$:
\begin{equation}\label{multidbad}
	\begin{split}
		u_t+(x_1u^2+u^4)_{x_1}+\sum_{i=2}^{n}f_i(u)_{x_i}&=0, \\
		u(x,0)&=-1.
	\end{split}
\end{equation}
The ill-posedness of \eqref{multidbad} is a trivial consequence of the result for \eqref{coercive flux}.

\section{Acknowledgements}
SSG and PV would like to thank the Department of Atomic Energy, Government of India, for their support under project no. 12-R\&D-TFR-5.01-0520.

\bibliographystyle{plain}
\bibliography{citations}

\begin{thebibliography}{10}

\bibitem{discofluxog}
Adimurthi and G.~D.~Veerappa Gowda.
\newblock Conservation law with discontinuous flux.
\newblock {\em Journal of Mathematics of Kyoto University}, 43:27--70, 2003.

\bibitem{hypconlawmanifolds}
Matania Ben-Artzi and Philippe~G. LeFloch.
\newblock Well-posedness theory for geometry-compatible hyperbolic conservation
  laws on manifolds.
\newblock {\em Annales de l'Institut Henri Poincaré C, Analyse non linéaire},
  24(6):989--1008, 2007.

\bibitem{BKRT2004}
R.~B{\"u}rger, K.~H. Karlsen, N.~H. Risebro, and J.~D. Towers.
\newblock Well-posedness in $\text{BV}$ and convergence of a difference scheme
  for continuous sedimentation in ideal clarifier-thickener units.
\newblock {\em Numerische Mathematik}, 97(1):25--65, 2004.

\bibitem{timecont}
Cl{\'e}ment Canc{\`e}s and Thierry Gallou{\"e}t.
\newblock On the time continuity of entropy solutions.
\newblock {\em Journal of Evolution Equations}, 11(1):43--55, 2011.

\bibitem{correspondence}
Vincent Caselles.
\newblock Scalar conservation laws and $\text{H}$amilton-$\text{J}$acobi
  equations in one-space variable.
\newblock {\em Nonlinear Analysis: Theory, Methods and Applications},
  18(5):461--469, 1992.

\bibitem{LipEul}
Elisabetta Chiodaroli, Camillo De~Lellis, and Ondřej Kreml.
\newblock Global ill-posedness of the isentropic system of gas dynamics.
\newblock {\em Communications on Pure and Applied Mathematics},
  68(7):1157--1190, 2015.

\bibitem{ConLawHJB}
Rinaldo~M. Colombo, Vincent Perrollaz, and Abraham Sylla.
\newblock Conservation laws and $\text{H}$amilton-$\text{J}$acobi equations
  with space inhomogeneity.
\newblock {\em Journal of Evolution Equations}, 23(3):50, 2023.

\bibitem{polygon}
Constantine Dafermos.
\newblock Polygonal approximations of solutions of the initial value problem
  for a conservation law.
\newblock {\em Journal of Mathematical Analysis and Applications},
  38(1):33--41, 1972.

\bibitem{GenChar}
Constantine Dafermos.
\newblock Generalized characteristics and the structure of solutions of
  hyperbolic conservation laws.
\newblock {\em Indiana University Mathematics Journal}, 26(6):1097--1119, 1977.

\bibitem{Dafermos2016}
Constantine~M. Dafermos.
\newblock {\em Hyperbolic Conservation Laws in Continuum Physics}.
\newblock Springer Berlin Heidelberg, 2016.

\bibitem{dalibard}
Anne-Laure Dalibard.
\newblock Kinetic formulation for heterogeneous scalar conservation laws.
\newblock {\em Annales de l'Institut Henri Poincaré C, Analyse non linéaire},
  23(4):475--498, 2006.

\bibitem{convexintog}
Camillo De~Lellis and László Szekelyhidi.
\newblock The $\text{E}$uler equations as a differential inclusion.
\newblock {\em Annals of Mathematics}, 170, 02 2007.

\bibitem{Filippov}
A.~F. Filippov.
\newblock {\em Differential equations with discontinuous righthand sides}.
\newblock Mathematics and its Applications. Springer, Dordrecht, Netherlands,
  2010.

\bibitem{singleshock}
Shyam~Sundar Ghoshal and Parasuram Venkatesh.
\newblock $\text{L}^2$ stability of simple shocks for spatially heterogeneous
  conservation laws, 2025.

\bibitem{trafficnetwork}
Helge Holden and Nils~Henrik Risebro.
\newblock A mathematical model of traffic flow on a network of unidirectional
  roads.
\newblock {\em SIAM Journal on Mathematical Analysis}, 26(4):999--1017, 1995.

\bibitem{Jaffr1995}
Jérôme Jaffré.
\newblock Flux calculation at the interface between two rock types for
  two-phase flow in porous media.
\newblock {\em Transport in Porous Media}, 21(3):195–207, 1995.

\bibitem{Kato1975}
Tosio Kato.
\newblock The cauchy problem for quasi-linear symmetric hyperbolic systems.
\newblock {\em Archive for Rational Mechanics and Analysis}, 58(3):181--205,
  1975.

\bibitem{kruzkov}
S.~N. Kružkov.
\newblock First order quasilinear equations in several independent variables.
\newblock {\em Mathematics of the USSR-Sbornik}, 10(2):217, 1970.

\bibitem{Lengeler}
Daniel Lengeler and Thomas Müller.
\newblock Scalar conservation laws on constant and time-dependent riemannian
  manifolds.
\newblock {\em Journal of Differential Equations}, 254(4):1705--1727, 2013.

\bibitem{otto}
Felix Otto.
\newblock A regularizing effect of nonlinear transport equations.
\newblock {\em Quarterly of Applied Mathematics}, 56(2):355--375, 1998.

\bibitem{perthame}
Benoît Perthame.
\newblock {\em Kinetic Formulation of Conservation Laws}.
\newblock Oxford University Press, 2002.

\bibitem{robyr}
Roger Robyr.
\newblock $\text{SBV}$ regularity of entropy solutions for a class of genuinely
  nonlinear scalar balance laws with non-convex flux function.
\newblock {\em Journal of Hyperbolic Differential Equations}, 05(02):449--475,
  2008.

\bibitem{sylla2024convergencefinitevolumescheme}
Abraham Sylla.
\newblock Convergence of a finite volume scheme for compactly heterogeneous
  scalar conservation laws.
\newblock {\em to appear in IMA Journal of Numerical Analysis}, 2025.

\bibitem{hetft}
Parasuram Venkatesh.
\newblock Front tracking for scalar conservation laws with spatially
  heterogeneous flux, 2025.

\bibitem{Volpert}
A~I Vol'pert.
\newblock The spaces $\text{BV}$ and quasilinear equations.
\newblock {\em Math. USSR}, 2(2), 1967.

\end{thebibliography}
\end{document}